\theoremstyle{plain}
\newtheorem{thm}{Theorem}[section]
\newtheorem{lem}[thm]{Lemma}
\newtheorem{pro}[thm]{Proposition}
\newtheorem{co}[thm]{Corollary}
\theoremstyle{definition}
\newtheorem{defn}[thm]{Definition}
\theoremstyle{remark}
\newtheorem{example}[thm]{Example}
\newcommand{\Gtwo}{\ifmmode{{\rm G}_2}\else{${\rm G}_2$}\fi}
\date{\today}
\begin{document}

\title[]
 {On the geometry of trans-para-Sasakian manifolds}

\author[S. Zamkovoy]{Simeon Zamkovoy}

\address{
University of Sofia "St. Kl. Ohridski"\\
Faculty of Mathematics and Informatics\\
Blvd. James Bourchier 5\\
1164 Sofia, Bulgaria}
\email{zamkovoy@fmi.uni-sofia.bg}



\subjclass{53D15}

\keywords{trans-para-Sasakian manifolds, 3-dimensional trans-para-Sasakian manifolds, $\xi-$sectional curvature}


\begin{abstract}
In this paper, we introduce the trans-para-Sasakian manifolds and we study their geometry. These manifolds are an analogue of the trans-Sasakian manifolds in the Riemannian geometry. We shall investigate many curvature properties of these manifolds and we shall give many conditions under which the manifolds are either $\eta-$Einstein or Einstein manifolds.
\end{abstract}

\newcommand{\g}{\mathfrak{g}}
\newcommand{\s}{\mathfrak{S}}
\newcommand{\D}{\mathcal{D}}
\newcommand{\F}{\mathcal{F}}
\newcommand{\R}{\mathbb{R}}
\newcommand{\K}{\mathbb{K}}
\newcommand{\U}{\mathbb{U}}
\newcommand{\diag}{\mathrm{diag}}
\newcommand{\End}{\mathrm{End}}
\newcommand{\im}{\mathrm{Im}}
\newcommand{\id}{\mathrm{id}}
\newcommand{\Hom}{\mathrm{Hom}}

\newcommand{\Rad}{\mathrm{Rad}}
\newcommand{\rank}{\mathrm{rank}}
\newcommand{\const}{\mathrm{const}}
\newcommand{\tr}{{\rm tr}}
\newcommand{\ltr}{\mathrm{ltr}}
\newcommand{\codim}{\mathrm{codim}}
\newcommand{\Ker}{\mathrm{Ker}}

\newcommand{\thmref}[1]{Theorem~\ref{#1}}
\newcommand{\propref}[1]{Proposition~\ref{#1}}
\newcommand{\corref}[1]{Corollary~\ref{#1}}
\newcommand{\secref}[1]{\S\ref{#1}}
\newcommand{\lemref}[1]{Lemma~\ref{#1}}
\newcommand{\dfnref}[1]{Definition~\ref{#1}}


\newcommand{\ee}{\end{equation}}
\newcommand{\be}[1]{\begin{equation}\label{#1}}

\maketitle

\section{Introduction}\label{sec-1}
In Grey-Hervella classification of almost Hermitian manifolds (see \cite{GH}), there appears a class, $\mathcal{W}_4$, of Hermitian manifolds which are closely related to locally conformal $K\ddot{a}hler$
manifolds. An almost contact structure on a manifold $M$ is called a \emph{trans-Sasakian structure} (see \cite{O}) if the product manifold $M\times \mathbb{R}$ belongs to the class $\mathcal{W}_4$. The class
$\mathcal{C}_6\bigoplus\mathcal{C}_5$ (see \cite{M}, \cite{MC}) coincides with the class of trans-Sasakian structures of type $(\alpha,\beta)$. In fact, in (see \cite{MC}), local nature of the two subclasses, namely the  $\mathcal{C}_5$ and the $\mathcal{C}_6$ structures, of trans-Sasakian structures are characterized completely. We note the that trans-Sasakian structures of type $(0,0)$, $(0,\beta)$ and $(\alpha,0)$ are cosympletic (see \cite{B1}), $\beta-$Kenmotsu (see \cite{JV}) and $\alpha-$Sasakian (see \cite{JV}), respectively. We consider the trans-para-Sasakian manifolds as an analogue of the trans-Sasakian manifolds. A trans-para-Sasakian manifold is a trans-para-Sasakian structure of type $(\alpha,\beta)$, where $\alpha$ and $\beta$ are smooth functions. The trans-para-Sasakian manifolds of types $(\alpha,\beta)$, and are respecively the para-cosympletic, para-Sasakian (in case $\alpha=1$, these are just the para-Sasakian manifolds; in case $\alpha=-1$, these are the quasi-para-Sasakian manifolds, see \cite{Z2}) and  para-Kenmotsu (for the case $\beta=1$  see \cite{Z3}). In the second section, we give the formal definition of trans-para-Sasakian manifolds of type $(\alpha,\beta)$ and we prove some basic properties. We give an example for a 3-dimensional trans-para-Sasakian manifold. In the last section, we investigate the curvature properties of the trans-para-Sasakian manifolds. Further, we find many conditions under which the manifolds are either $\eta-$Einstein or Einstein manifolds.
\section{Preliminaries}\label{sec-2}
A (2n+1)-dimensional smooth manifold $M^{(2n+1)}$
has an \emph{almost paracontact structure}
$(\varphi,\xi,\eta)$
if it admits a tensor field
$\varphi$ of type $(1,1)$, a vector field $\xi$ and a 1-form
$\eta$ satisfying the  following compatibility conditions 
\begin{eqnarray}
  \label{f82}
    & &
    \begin{array}{cl}
     (i)   & \varphi(\xi)=0,\quad \eta \circ \varphi=0,\quad
     \\[5pt]
     (ii)  & \eta (\xi)=1 \quad \varphi^2 = id - \eta \otimes \xi,
     \\[5pt]
     (iii) & \textrm{distribution $\mathbb {D}: p \in M \longrightarrow \mathbb {D}_p\subset T_pM:$}
     \\[1pt]
     & \textrm{$\mathbb D_p=Ker \eta=\{X\in T_pM: \eta (X)=0\}$ is called {\it paracontact}}
     \\[1pt]
     & \textrm{{\it distribution} generated by $\eta$.}
    \end{array}
\end{eqnarray}

The tensor field $\varphi $ induces an almost paracomplex structure \cite{KW} on each
fibre on $\mathbb D$ and $(\mathbb D, \varphi , g_{\vert \mathbb D})$ is a $2n$-dimensional
almost paracomplex distribution. Since $g$ is non-degenerate metric on $M$ and $\xi $ is non-isotropic,
the paracontact distribution $\mathbb D$ is non-degenerate.

An immediate consequence of the definition of the almost
paracontact structure is that the endomorphism $\varphi$ has rank
$2n$,
$\varphi \xi=0$ and $\eta \circ \varphi=0$, (see \cite{B1,B2}
for the almost contact case).


If a manifold $M^{(2n+1)}$ with $(\varphi,\xi,\eta)$-structure
admits a pseudo-Riemannian metric $g$ such that
\begin{equation}\label{con}
g(\varphi X,\varphi Y)=-g(X,Y)+\eta (X)\eta (Y),
\end{equation}
then we say that $M^{(2n+1)}$ has an almost paracontact metric structure and
$g$ is called \emph{compatible}. Any compatible metric $g$ with a given almost paracontact
structure is necessarily of signature $(n+1,n)$.

Note that setting $Y=\xi$, we have
$\eta(X)=g(X,\xi).$

Further, any almost paracontact structure admits a compatible metric.
\begin{defn}
If $g(X,\varphi Y)=d\eta(X,Y)$ (where
$d\eta(X,Y)=\frac12(X\eta(Y)-Y\eta(X)-\eta([X,Y])$ then $\eta$ is
a paracontact form and the almost paracontact metric manifold
$(M,\varphi,\eta,\xi,g)$ is said to be a $\emph{paracontact metric
manifold}$.
\end{defn}
A paracontact metric manifold for which $\xi$ is Killing is called a $K-\emph{paracontact}$ $\emph{manifold}$. A paracontact structure on $M^{(2n+1)}$ naturally gives rise to an almost paracomplex structure on the product $M^{(2n+1)}\times\Re$. If this almost paracomplex structure is integrable, then the given paracontact metric manifold is said to be a $\emph{para-Sasakian}$. Equivalently, (see \cite{Z}) a paracontact metric manifold is a para-Sasakian if and only if
\begin{equation}\label{8}
(\nabla_{X}\varphi)Y=-g(X,Y)\xi+\eta(Y)X,
\end{equation}
 for all vector fields $X$ and $Y$ (where
$\nabla$ is the Livi-Civita connection of $g$).
\begin{defn}\label{d1}
If $(\nabla_X\varphi)Y=\alpha(-g(X,Y)\xi+\eta(Y)X)+\beta(g(X,\varphi Y)\xi+\eta(Y)\varphi X),$ then the manifold
$(M^{(2n+1)},\varphi,\eta,\xi,g)$ is said to be a $\emph{trans-para-Sasakian manifold}$.
\end{defn}
From $Definition~\ref{d1}$ we have
\begin{equation}\label{e0}
\nabla_{X}\xi=-\alpha\varphi X-\beta(X-\eta(X)\xi).
\end{equation}
\begin{defn}
A $(2n+1)$-dimensional almost paracontact metric manifold is called
\item {\it normal} if $N(X,Y)-2d\eta (X,Y)\xi = 0$, where
$N(X,Y)=\varphi ^2[X,Y]+[\varphi X,\varphi Y]-\varphi [\varphi X,Y]-\varphi [X,\varphi Y]$
is the Nijenhuis torsion tensor of $\varphi $ (see \cite{Z}).
\end{defn}
Denoting by $\pounds$ the Lie differentiation of $g$, we see
\begin{pro}\label{pro1}
Let $(M^{(2n+1)},\varphi,\eta,\xi,g)$ be a trans-para-Sasakian manifold. Then we have
\begin{equation}\label{e1}
(\nabla_X\eta)Y=\alpha g(X,\varphi Y)-\beta(g(X,Y)-\eta(X)\eta(Y)),
\end{equation}
\begin{equation}\label{e1.1}
d\eta(X,Y)=\alpha g(X,\varphi Y),
\end{equation}
\begin{equation}\label{e2}
(\pounds_{\xi}g)(X,Y)=-2\beta(g(X,Y)-\eta(X)\eta(Y)),
\end{equation}
\begin{equation}\label{e3}
\pounds_{\xi}\varphi=0,
\end{equation}
\begin{equation}\label{e4}
\pounds_{\xi}\eta=0,
\end{equation}
where $X,Y\in T_pM.$
\end{pro}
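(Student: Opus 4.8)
The plan is to derive all five identities directly from the structural equation in \dfnref{d1} and its immediate consequence \eqref{e0}, together with the compatibility conditions \eqref{f82} and \eqref{con}. The starting point is the observation that, since $\eta(Y) = g(Y,\xi)$ and $\nabla$ is metric, we have $(\nabla_X\eta)Y = g(Y,\nabla_X\xi)$. Substituting \eqref{e0} gives
\begin{equation*}
(\nabla_X\eta)Y = g\bigl(Y,\,-\alpha\varphi X-\beta(X-\eta(X)\xi)\bigr) = -\alpha\, g(\varphi X,Y) - \beta\bigl(g(X,Y)-\eta(X)\eta(Y)\bigr),
\end{equation*}
and since $g(\varphi X,Y) = -g(X,\varphi Y)$ (a consequence of \eqref{con}, using that $\varphi^2 = \id - \eta\otimes\xi$ and skew-symmetry of $\varphi$ with respect to $g$), this is exactly \eqref{e1}.

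Next I would obtain \eqref{e1.1} by antisymmetrizing \eqref{e1}: since $d\eta(X,Y) = \tfrac12\bigl((\nabla_X\eta)Y - (\nabla_Y\eta)X\bigr)$ for the Levi-Civita connection, the symmetric $\beta$-term cancels and the $\alpha$-term, being already skew in $X,Y$ (because $g(X,\varphi Y) = -g(Y,\varphi X)$), survives to give $d\eta(X,Y) = \alpha\, g(X,\varphi Y)$. For \eqref{e2} I would use $(\pounds_\xi g)(X,Y) = g(\nabla_X\xi,Y) + g(X,\nabla_Y\xi)$; plugging in \eqref{e0} twice, the $\alpha\varphi$-contributions cancel by skew-symmetry of $\varphi$, leaving $-2\beta\bigl(g(X,Y)-\eta(X)\eta(Y)\bigr)$.

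For \eqref{e3} I would compute $(\pounds_\xi\varphi)X = [\xi,\varphi X] - \varphi[\xi,X] = \nabla_\xi(\varphi X) - \nabla_{\varphi X}\xi - \varphi\nabla_\xi X + \varphi\nabla_X\xi = (\nabla_\xi\varphi)X - \nabla_{\varphi X}\xi + \varphi\nabla_X\xi$, then substitute \dfnref{d1} with $X$ replaced by $\xi$ (noting $\varphi\xi = 0$, $\eta(\xi)=1$) for the first term and \eqref{e0} for the other two; the $\alpha$- and $\beta$-terms should cancel pairwise, using $\varphi^2 X = X - \eta(X)\xi$ and $\eta(\varphi X) = 0$. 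Finally \eqref{e4} follows from $(\pounds_\xi\eta)X = \xi(\eta(X)) - \eta([\xi,X]) = (\nabla_\xi\eta)X + \eta(\nabla_X\xi)$; by \eqref{e1} the first term is $-\beta(\eta(X)-\eta(X)) = 0$ (since $g(\xi,\varphi X)=0$), and by \eqref{e0} the second is $\eta(-\alpha\varphi X - \beta(X-\eta(X)\xi)) = 0$, so their sum vanishes. The only mildly delicate point throughout is bookkeeping the sign conventions coming from the para-complex compatibility \eqref{con} (which gives $g(\varphi X,\varphi Y) = -g(X,Y)+\eta(X)\eta(Y)$ rather than the Riemannian $+g(X,Y)$), so I would state once at the outset that $\varphi$ is skew-adjoint and $g(\varphi X, Y) = -g(X,\varphi Y)$, and reuse it; no step presents a real obstacle beyond careful cancellation.
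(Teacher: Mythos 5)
Your proof is correct: each of the five identities follows exactly as you describe from \eqref{e0}, the skew-adjointness $g(\varphi X,Y)=-g(X,\varphi Y)$ (which you correctly derive from \eqref{con} and $\varphi^2=\id-\eta\otimes\xi$), and the torsion-freeness and metricity of $\nabla$. The paper omits the proof as a ``routine calculation,'' and your argument is precisely that standard routine computation, so there is nothing to contrast.
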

Since the proof of $Proposition~\ref{pro1}$ follows by routine calculation, we shall omit it.

From $Proposition~\ref{pro1}$ we see that $(M^{(2n+1)},\varphi,\eta,\xi,g)$ is normal.

\begin{example}
Let us consider the 3-dimensional manifold $M^3=\{(x,y,z):(x,y,z)\in \Re^{3}_{1}\},z\neq 0$, where $(x,y,z)$ are the standard coordinates in $\Re^{3}_{1}$.We choose the vector fields
$$E_1=e^z(\frac{\partial}{\partial x}+y\frac{\partial}{\partial z}). \quad E_2=e^z\frac{\partial}{\partial y}, \quad E_3=\frac{\partial}{\partial z},$$
which are linearly independent at each point of $M$. We define an almost paracontact structure $(\varphi ,\xi ,\eta)$ and a pseudo-Riemannian metric $g$ in the following way:
\[
\begin{array}{llll}
\varphi E_1=E_2 , \quad \varphi E_2=E_1 , \quad \varphi E_3=0 \\
\xi =E_3 , \quad \eta (E_3)=1 , \quad \eta (E_1)=\eta (E_2)=0 , \\
g(E_1,E_1)=g(E_3,E_3)=-g(E_2,E_2)=1 ,\\
\quad g(E_i,E_j)=0, \quad i\neq j \in \{1,2,3\}.
\end{array}
\]
By the definition of Lie bracket, we have
\[
\begin{array}{llll}
[E_1,E_2]=ye^zE_2-e^{2z}E_3, \quad [E_2,E_3]=-E_2, \quad [E_1,E_3]=-E_3.
\end{array}
\]
Then $(M,\varphi ,\xi ,\eta ,g)$ is a 3-dimensional almost paracontact manifold. The Koszul equality becomes
\[
\begin{array}{llll}
\nabla_{E_1}E_1=E_3 , \quad \nabla_{E_1}E_2=-\frac{1}{2}e^{2z}E_3 , \quad \nabla_{E_1}E_3=-E_1-\frac{1}{2}e^{2z}E_2, \\
\nabla_{E_2}E_1=-ye^zE_2+\frac{1}{2}e^{2z}E_3  , \quad \nabla_{E_2}E_2=-ye^zE_1-E_3, \quad \nabla_{E_2}E_3=-\frac{1}{2}e^{2z}E_1-E_2, \\
\nabla_{E_3}E_1=-\frac{1}{2}e^{2z}E_2  , \quad \nabla_{E_3}E_2=-\frac{1}{2}e^{2z}E_1, \quad \nabla_{E_3}E_3=0.   \\
\end{array}
\]
We have $\nabla_{E_1}\xi=-\alpha\varphi E_1-\beta E_2, \quad \nabla_{E_2}\xi=-\alpha\varphi E_2-\beta E_2, \quad \nabla_{\xi}\xi=0$ for $E_3=\xi$, where
$\alpha=\frac{1}{2}e^{2z}$ and $\beta=1$.

Again, by virtue of \eqref{e1} and $(\nabla_X\eta)Y=X(\eta(Y))-\eta(\nabla_XY)$ we obtain
$$(\nabla_{E_1}\eta)E_1=-\beta=-1, \quad (\nabla_{E_2}\eta)E_1=-\alpha=-\frac{1}{2}e^{2z}, \quad (\nabla_{E_3}\eta)E_1=0.$$
Thus from above the calculation the condition \eqref{e0} and \eqref{e1} are satisfied and the structure $(\varphi ,\xi ,\eta ,g)$ is a trans-para-Sasakian structure of type $(\alpha,\beta)$,
where $\alpha=\frac{1}{2}e^{2z}$ and $\beta=1$. Consequently $(M^3,\varphi ,\xi ,\eta ,g)$ is a trans-para-Sasakian manifold.
\end{example}
Finally, the sectional curvature $K(\xi,X)=\epsilon_XR(X,\xi,\xi,X)$, where $|X|=\epsilon_X=\pm 1$, of a plane section spanned by $\xi$ and the vector $X$ orthogonal to $\xi$ is called $\emph{$\xi$-sectional curvature}$, where denoting by $R$ the curvature tensor of $\nabla$.
\section{Some curvatureb properties of trans-para-Sasakian manifolds}\label{sec-3}
We begin with the following Lemma.
\begin{lem}\label{lem1}
Let $(M^{(2n+1)},\varphi,\eta,\xi,g)$ be a trans-para-Sasakian manifold. Then we have
\begin{equation}\label{e5}
R(X,Y)\xi=-(\alpha^2+\beta^2)(\eta(Y)X-\eta(X)Y)-2\alpha\beta(\eta(Y)\varphi X-\eta(X)\varphi Y)-
\end{equation}
$$-X(\alpha)\varphi Y+Y(\alpha)\varphi X+Y(\beta)\varphi^2X-X(\beta)\varphi^2Y.$$
\end{lem}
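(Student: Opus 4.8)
The plan is to compute $R(X,Y)\xi = \nabla_X\nabla_Y\xi - \nabla_Y\nabla_X\xi - \nabla_{[X,Y]}\xi$ by inserting the basic formula \eqref{e0}, namely $\nabla_Z\xi = -\alpha\varphi Z - \beta(Z - \eta(Z)\xi)$, everywhere a covariant derivative of $\xi$ appears, and then simplifying using Definition~\ref{d1}, Proposition~\ref{pro1}, and the structure identities \eqref{f82} and \eqref{con}. First I would differentiate \eqref{e0}: apply $\nabla_X$ to $\nabla_Y\xi = -\alpha\varphi Y - \beta(Y-\eta(Y)\xi)$, using the Leibniz rule, so that derivatives fall on the functions $\alpha,\beta$ (producing the terms $X(\alpha)$, $X(\beta)$), on $\varphi$ (producing $(\nabla_X\varphi)Y$, to be expanded via Definition~\ref{d1}), on $\eta$ (producing $(\nabla_X\eta)Y$, to be expanded via \eqref{e1}), and on $Y$ and $\xi$ (the $\nabla_X Y$ terms cancel against the $\nabla_{[X,Y]}\xi$ term, and $\nabla_X\xi$ is again \eqref{e0}).

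Then I would form the antisymmetrization in $X,Y$. The terms where the derivative lands purely on $Y$ (or $X$) combine with $-\nabla_{[X,Y]}\xi$ and vanish by torsion-freeness of $\nabla$, exactly as in the classical trans-Sasakian computation. The terms with $(\nabla_X\varphi)Y - (\nabla_Y\varphi)X$ should contribute the $-(\alpha^2+\beta^2)$ piece together with part of the $2\alpha\beta$ piece, after using $\varphi^2 = \mathrm{id}-\eta\otimes\xi$ and $\varphi\xi=0$; the cross terms $-\alpha\varphi(\beta(\cdots)) $ and $-\beta(\cdots - \eta(\cdots)\xi)$ evaluated on $-\alpha\varphi Y$ etc. supply the remaining $-2\alpha\beta(\eta(Y)\varphi X - \eta(X)\varphi Y)$. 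The derivative-of-coefficient terms assemble into $-X(\alpha)\varphi Y + Y(\alpha)\varphi X$ directly, while the $\beta$-derivative terms appear as $-X(\beta)(Y-\eta(Y)\xi) + Y(\beta)(X-\eta(X)\xi)$, which I would rewrite as $Y(\beta)\varphi^2 X - X(\beta)\varphi^2 Y$ using $\varphi^2 Z = Z - \eta(Z)\xi$, matching the stated right-hand side.

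The main obstacle is bookkeeping: there are many terms of several types (coefficient-derivative terms, $\varphi$-derivative terms via Definition~\ref{d1}, $\eta$-derivative terms, and quadratic-in-$(\alpha,\beta)$ terms from composing \eqref{e0} with itself), and the cancellations rely on repeated use of $\eta(\varphi Z)=0$, $\varphi\xi=0$, $\eta(\xi)=1$, and $g(X,\varphi Y)=-g(Y,\varphi X)$. I would organize the computation by grouping according to these types before antisymmetrizing, so that each group collapses cleanly; no single step is conceptually hard, but care is needed to ensure every $\eta(X)$- and $\eta(Y)$-weighted term is tracked, since those are precisely what distinguish the $\varphi^2$ terms from bare $X,Y$ terms in the final formula.
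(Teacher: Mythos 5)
Your proposal is correct and follows essentially the same route as the paper: expand $\nabla_X\nabla_Y\xi$ from \eqref{e0} via the Leibniz rule, antisymmetrize so the $\nabla_XY$ terms cancel against $\nabla_{[X,Y]}\xi$, and substitute Definition~\ref{d1} for $(\nabla_X\varphi)Y$ and \eqref{e1} for $(\nabla_X\eta)Y$ (whose $\xi$-proportional contribution cancels the $g(X,\varphi Y)\xi$ term coming from the $\beta$-part of $(\nabla_X\varphi)Y$). The bookkeeping you flag is exactly what the paper carries out, so no further comment is needed.
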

\begin{proof}
Using $Definition~\ref{d1}$, we obtain
$$\nabla_X\nabla_Y\xi=\nabla_X(-\alpha\varphi Y-\beta(Y-\eta(Y)\xi)=$$
$$=-X(\alpha)\varphi Y-\alpha\nabla_X{\varphi Y}-X(\beta)\varphi^2Y-\beta\nabla_XY-\beta(X\eta(Y))\xi-$$
$$-\alpha\beta\eta(Y)\varphi X-\beta^2\eta(Y)X+\beta^2\eta(X)\eta(Y)\xi,$$
From here and \eqref{e0}, we get
$$R(X,Y)\xi=\nabla_X\nabla_Y\xi-\nabla_Y\nabla_X\xi-\nabla_{[X,Y]}\xi=$$
$$=-X(\alpha)\varphi Y+Y(\alpha)\varphi X-\alpha((\nabla_X\varphi)Y-(\nabla_Y\varphi)X)-$$
$$-X(\beta)\varphi^{2}Y+Y(\beta)\varphi^{2}X+\beta((\nabla_X\eta)Y-(\nabla_Y\eta)X)\xi-$$
$$-\alpha\beta(\eta(Y)\varphi X-\eta(X)\varphi Y)-\beta^2(\eta(Y)X-\eta(X)Y),$$
which in view of $Definition~\ref{d1}$ and \eqref{e1} gives \eqref{e5}.
\end{proof}
$Lemma~\ref{lem1}$ yields the following
\begin{pro}
If $(M^{(2n+1)},\varphi,\eta,\xi,g)$ is a trans-para-Sasakian manifold, then it is of $\xi-$sectional curvature $K(\xi,X)=-\epsilon_X(\alpha^2+\beta^2-\xi(\beta))$.

\end{pro}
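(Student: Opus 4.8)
The plan is to feed $Y=\xi$ into the curvature identity \eqref{e5} of Lemma~\ref{lem1} and then contract with the vector $X$ itself; after that everything reduces to an elementary computation.

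First I would substitute $Y=\xi$ in \eqref{e5}. Since $X$ is orthogonal to $\xi$ we have $\eta(X)=g(X,\xi)=0$, and from \eqref{f82} we have $\eta(\xi)=1$, $\varphi\xi=0$, $\varphi^{2}X=X-\eta(X)\xi=X$ and $\varphi^{2}\xi=\xi-\eta(\xi)\xi=0$. Inserting these, every term of \eqref{e5} containing $\eta(X)$, $\varphi\xi$ or $\varphi^{2}\xi$ drops out and the identity collapses to
$$R(X,\xi)\xi=\bigl(\xi(\beta)-\alpha^{2}-\beta^{2}\bigr)X+\bigl(\xi(\alpha)-2\alpha\beta\bigr)\varphi X .$$

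Next I would pair this with $X$ using the compatible metric $g$. The term proportional to $\varphi X$ disappears because $g(\varphi X,X)=0$: replacing $X$ by $\varphi X$ in the compatibility relation \eqref{con} and using $\varphi^{2}X=X-\eta(X)\xi$ together with $\eta\circ\varphi=0$ shows that the bilinear form $(X,Y)\mapsto g(X,\varphi Y)$ is skew-symmetric, so $g(\varphi X,X)=-g(X,\varphi X)=0$. Since $X$ is a unit vector, $g(X,X)=\epsilon_{X}$, and we are left with
$$R(X,\xi,\xi,X)=g\bigl(R(X,\xi)\xi,X\bigr)=\bigl(\xi(\beta)-\alpha^{2}-\beta^{2}\bigr)\epsilon_{X}=-\epsilon_{X}\bigl(\alpha^{2}+\beta^{2}-\xi(\beta)\bigr),$$
which, by the definition of the $\xi$-sectional curvature recalled just before the statement, is exactly the asserted value of $K(\xi,X)$.

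There is really no obstacle here: the argument is purely a matter of substitution together with two auxiliary facts, namely that $\varphi^{2}$ acts as the identity on vectors orthogonal to $\xi$ and that the fundamental two-form $g(\,\cdot\,,\varphi\,\cdot\,)$ is skew (hence $g(\varphi X,X)=0$). The only place where a little care is needed is keeping track of the normalization constant $\epsilon_{X}$ when passing from $R(X,\xi,\xi,X)$ to $K(\xi,X)$.
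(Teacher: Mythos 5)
Your argument is correct and is essentially the paper's own: the paper gives no written proof beyond the remark that Lemma~\ref{lem1} yields the proposition, and your substitution of $Y=\xi$ into \eqref{e5} followed by pairing with $X$ (using $\varphi^{2}X=X$ for $X\perp\xi$ and the skew-symmetry of $g(\cdot,\varphi\,\cdot)$ to kill the $\varphi X$ term) is exactly the computation being alluded to. One bookkeeping caveat: you correctly obtain $R(X,\xi,\xi,X)=-\epsilon_X(\alpha^{2}+\beta^{2}-\xi(\beta))$ but then identify this directly with $K(\xi,X)$, whereas the paper's stated definition is $K(\xi,X)=\epsilon_X R(X,\xi,\xi,X)$, which would give $K(\xi,X)=-(\alpha^{2}+\beta^{2}-\xi(\beta))$ with no $\epsilon_X$ factor; since the proposition's formula agrees with your unnormalized reading, this mismatch is an inconsistency internal to the paper rather than an error in your curvature computation, but it deserves a remark.
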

In a trans-para-Sasakian manifolds the functions $\alpha$ and $\beta$ can not be arbitrary. This fact is shown in the following
\begin{thm}\label{thm1}
In trans-para-Sasakian manifold, we have
\begin{equation}\label{e6}
R(\xi,X)\xi=(\alpha^2+\beta^2-\xi(\beta))(X-\eta(X)\xi),
\end{equation}
\begin{equation}\label{e7}
2\alpha\beta-\xi(\alpha)=0.
\end{equation}
\end{thm}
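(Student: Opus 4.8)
The plan is to read off $R(\xi,X)\xi$ from \lemref{lem1} and then to extract \eqref{e7} from the algebraic symmetries of the curvature tensor together with the skew-symmetry of $\varphi$.

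First I would put $Y=\xi$ in \eqref{e5}. Using $\eta(\xi)=1$, $\varphi\xi=0$, $\varphi^{2}\xi=\xi-\eta(\xi)\xi=0$ and $\varphi^{2}X=X-\eta(X)\xi$, all the terms involving $\varphi\xi$ or $\varphi^{2}\xi$ disappear and one obtains
\begin{equation*}
R(X,\xi)\xi=-(\alpha^{2}+\beta^{2}-\xi(\beta))(X-\eta(X)\xi)-(2\alpha\beta-\xi(\alpha))\varphi X,
\end{equation*}
hence, by antisymmetry of $R$ in its first two arguments,
\begin{equation}\label{st}
R(\xi,X)\xi=(\alpha^{2}+\beta^{2}-\xi(\beta))(X-\eta(X)\xi)+(2\alpha\beta-\xi(\alpha))\varphi X.
\end{equation}
So \eqref{e6} will follow immediately once we prove that the coefficient $2\alpha\beta-\xi(\alpha)$ vanishes, i.e. \eqref{e7}.

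To get \eqref{e7} I would test \eqref{st} on the paracontact distribution. Let $X,Y\in\mathbb D$, so $\eta(X)=\eta(Y)=0$. The standard algebraic symmetries of $R$ (which are purely algebraic, hence insensitive to the signature of $g$) give $g(R(X,\xi)\xi,Y)=g(R(Y,\xi)\xi,X)$. On the other hand, replacing $Y$ by $\varphi Y$ in \eqref{con} and using $\eta\circ\varphi=0$ and $\varphi^{2}=\id-\eta\otimes\xi$ shows $g(\varphi X,Y)=-g(X,\varphi Y)$ for all $X,Y$, i.e. $\varphi$ is $g$-skew-symmetric. Substituting \eqref{st} into both sides of the symmetry relation, the terms carrying $\alpha^{2}+\beta^{2}-\xi(\beta)$ cancel because $g$ is symmetric, and one is left with
\begin{equation*}
(2\alpha\beta-\xi(\alpha))\,g(\varphi X,Y)=-(2\alpha\beta-\xi(\alpha))\,g(\varphi X,Y),
\end{equation*}
that is, $(2\alpha\beta-\xi(\alpha))\,g(\varphi X,Y)=0$ for all $X,Y\in\mathbb D$. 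Since $\mathbb D$ is non-degenerate and $\varphi$ maps $\mathbb D$ isomorphically onto itself (recall $\rank\varphi=2n$), one can pick $X,Y\in\mathbb D$ with $g(\varphi X,Y)\neq 0$, whence $2\alpha\beta-\xi(\alpha)=0$. Feeding \eqref{e7} back into \eqref{st} yields \eqref{e6}.

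I expect the only real obstacle to be this second step: plugging $Y=\xi$ into \lemref{lem1} does not by itself produce \eqref{e6}, but rather \eqref{st}, with the extra term $(2\alpha\beta-\xi(\alpha))\varphi X$; one must notice that this term is obstructed by the clash between the self-adjointness of the Jacobi-type endomorphism $X\mapsto R(X,\xi)\xi$ of $\mathbb D$ and the $g$-skew-symmetry of $\varphi$ on $\mathbb D$. Equivalently, one may evaluate $g(R(\xi,X)\xi,\varphi X)$ in two ways --- once from \eqref{st} applied to $X$, once from \eqref{st} applied to $\varphi X$, using $g(\varphi X,\varphi X)=-g(X,X)$ and $g(X,\varphi X)=0$ --- and compare; all the remaining manipulations are routine substitutions of the identities of \secref{sec-2}.
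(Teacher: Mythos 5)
Your argument is correct and is essentially the paper's own: both derive the identity $R(\xi,X)\xi=(\alpha^{2}+\beta^{2}-\xi(\beta))(X-\eta(X)\xi)+(2\alpha\beta-\xi(\alpha))\varphi X$ from Lemma~\ref{lem1} and then kill the $\varphi$-term by exploiting the pair symmetry $R(X,Y,Z,W)=R(Z,W,X,Y)$, which the paper implements by computing $R(\xi,X)\xi$ a second time via \eqref{e8} and you implement as self-adjointness of the Jacobi operator combined with the $g$-skew-symmetry of $\varphi$. Your version has the minor merit of making explicit the non-degeneracy of $g$ on $\mathbb D$ needed to conclude $2\alpha\beta-\xi(\alpha)=0$ from $(2\alpha\beta-\xi(\alpha))g(\varphi X,Y)=0$, a point the paper leaves implicit.
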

\begin{proof}
Using \eqref{e5} in $R(\xi,Z,X,Y)=R(X,Y,\xi,Z)$, we get
\begin{equation}\label{e8}
R(\xi,Z)X=-(\alpha^2+\beta^2)(g(X,Z)-\eta(X)Z)-2\alpha\beta(g(\varphi X,Z)\xi+\eta(X)\varphi Z)+
\end{equation}
$$+X(\alpha)\varphi Z+g(\varphi X,Z)grad\alpha-X(\beta)(Z-\eta(Z)\xi)-g(\varphi X,\varphi Z)grad\beta.$$
From \eqref{e5}, we get
$$R(\xi,X)\xi=(\alpha^2+\beta^2-\xi(\beta))(X-\eta(X)\xi)+(2\alpha\beta-\xi(\alpha))\varphi Y,$$
while gives us \eqref{e5}
$$R(\xi,X)\xi=(\alpha^2+\beta^2-\xi(\beta))(X-\eta(X)\xi)-(2\alpha\beta-\xi(\alpha))\varphi Y.$$
The above two equations provide \eqref{e6} and \eqref{e7}.
\end{proof}
From $Lemma~\ref{lem1}$, we have the following
\begin{pro}\label{pro2}
In a $(2n+1)-$dimensional tras-para-Sasakian manifold, we have
\begin{equation}\label{e9}
Ric(X,\xi)=-(2n(\alpha^2+\beta^2)-\xi(\beta))\eta(X)+(2n-1)X(\beta)-\varphi X(\alpha),
\end{equation}
\begin{equation}\label{e10}
Q\xi=-(2n(\alpha^2+\beta^2)-\xi(\beta))\xi+(2n-1)grad\beta+\varphi(grad\alpha),
\end{equation}
where $Ric$ is the Ricci tensor and $Q$ is the Ricci operator given by
\begin{equation}\label{e11}
Ric(X,Y)=g(QX,Y).
\end{equation}
\end{pro}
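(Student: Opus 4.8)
The plan is to obtain both formulas by tracing the curvature identity \eqref{e5} of Lemma~\ref{lem1}. Fix a pseudo-orthonormal frame $\{e_i\}_{i=1}^{2n+1}$ with $g(e_i,e_i)=\epsilon_i=\pm1$, so that $Ric(X,\xi)=\sum_i\epsilon_i\,g\big(R(e_i,X)\xi,e_i\big)$. Substituting $X\mapsto e_i$, $Y\mapsto X$ into \eqref{e5}, pairing with $e_i$, and summing then reduces the whole computation to six elementary sums.

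Before summing I would record the ingredients that each term needs: (a) the expansion $\sum_i\epsilon_i\,g(Z,e_i)e_i=Z$ for any vector $Z$, which for $Z=\xi$ gives $\sum_i\epsilon_i\eta(e_i)e_i=\xi$ since $\eta(\cdot)=g(\xi,\cdot)$, and for $Z=grad\,f$ gives $\sum_i\epsilon_i e_i(f)e_i=grad\,f$; (b) the skew-adjointness $g(\varphi X,Y)=-g(X,\varphi Y)$, an immediate consequence of \eqref{con}, whence $g(\varphi e_i,e_i)=0$ for every $i$ and $\tr\varphi=0$; (c) the trace identities $\tr(\id)=2n+1$ and, from $\varphi^2=\id-\eta\otimes\xi$ with $\eta(\xi)=1$, $\tr(\varphi^2)=2n$; (d) $\eta\circ\varphi=0$. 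With these in hand the six contributions from \eqref{e5} are: the $(\alpha^2+\beta^2)$-term gives $-(\alpha^2+\beta^2)\big((2n+1)\eta(X)-\eta(X)\big)=-2n(\alpha^2+\beta^2)\eta(X)$; both $\alpha\beta$-terms vanish, one by $\tr\varphi=0$ and the other by $\eta(\varphi X)=0$; the term $X(\alpha)\varphi e_i$ vanishes by $\tr\varphi=0$; the term $-e_i(\alpha)\varphi X$ yields $-g(\varphi X,grad\,\alpha)=-\varphi X(\alpha)$; the term $X(\beta)\varphi^2 e_i$ yields $\tr(\varphi^2)X(\beta)=2nX(\beta)$; and $-e_i(\beta)\varphi^2 X$ yields $-g(\varphi^2X,grad\,\beta)=-\big(X(\beta)-\eta(X)\xi(\beta)\big)$. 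Adding them gives exactly \eqref{e9}.

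Equation \eqref{e10} then follows formally. Since $Ric$ is symmetric, $g(Q\xi,X)=Ric(\xi,X)=Ric(X,\xi)$ by \eqref{e11}; rewriting the right side of \eqref{e9} through $\eta(X)=g(\xi,X)$, $X(\beta)=g(grad\,\beta,X)$ and $\varphi X(\alpha)=g(grad\,\alpha,\varphi X)=-g\big(\varphi(grad\,\alpha),X\big)$ (the last step using (b)), the nondegeneracy of $g$ forces $Q\xi=-(2n(\alpha^2+\beta^2)-\xi(\beta))\xi+(2n-1)grad\,\beta+\varphi(grad\,\alpha)$. The only place demanding care is the pseudo-Riemannian bookkeeping — carrying the signs $\epsilon_i$ correctly through every contraction — together with the two trace evaluations $\tr\varphi=0$ and $\tr\varphi^2=2n$; once those are in place the rest is direct substitution.
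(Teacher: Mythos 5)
Your proof is correct: contracting \eqref{e5} over a pseudo-orthonormal frame with the signs $\epsilon_i$, using $\tr\varphi=0$, $\tr\varphi^2=2n$ and $\eta\circ\varphi=0$, yields exactly \eqref{e9}, and \eqref{e10} follows by dualizing via the symmetry of $Ric$ and the skew-adjointness of $\varphi$. This is precisely the route the paper intends (it states the proposition as an immediate consequence of Lemma~\ref{lem1} and omits the computation), so your argument simply supplies the details the paper leaves out.
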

\begin{co}\label{co1}
If in a $(2n+1)-$dimensional trans-para-Sasakian manifold we have $\varphi(grad\alpha)=-(2n-1)grad\beta$, then
$$\xi(\beta)=g(\xi,grad\beta)=-\frac{1}{2n-1}g(\xi,\varphi(grad\alpha))=0,$$
and hence
\begin{equation}\label{e12}
Ric(X,\xi)=-2n(\alpha^2+\beta^2)\eta(X),
\end{equation}
\begin{equation}\label{e13}
Q\xi=-2n(\alpha^2+\beta^2)\xi.
\end{equation}
\end{co}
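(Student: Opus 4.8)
The plan is to deduce the whole statement from Proposition~\ref{pro2} together with two elementary algebraic facts about the almost paracontact metric structure. First I would record that $\varphi$ is skew-symmetric with respect to $g$, that is $g(\varphi X,Y)=-g(X,\varphi Y)$: this comes from substituting $\varphi Y$ for $Y$ in the compatibility relation \eqref{con}, using $\varphi^{2}=\id-\eta\otimes\xi$, $\eta\circ\varphi=0$ and $g(\varphi X,\xi)=\eta(\varphi X)=0$. Combined with $\varphi\xi=0$ this gives at once
$$g(\xi,\varphi(grad\,\alpha))=-g(\varphi\xi,grad\,\alpha)=0.$$

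Next I would contract the hypothesis $\varphi(grad\,\alpha)=-(2n-1)grad\,\beta$ against $\xi$. Solving it for $grad\,\beta$ and pairing with $\xi$ yields exactly the displayed chain $\xi(\beta)=g(\xi,grad\,\beta)=-\tfrac{1}{2n-1}g(\xi,\varphi(grad\,\alpha))=0$, the last equality by the previous step and the fact that $2n-1\neq 0$. Thus under the hypothesis $\beta$ is constant along $\xi$.

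Finally I would feed $\xi(\beta)=0$ and the hypothesis back into \eqref{e9} and \eqref{e10}. In \eqref{e9} the term $\xi(\beta)\eta(X)$ drops out, and the two remaining terms cancel: writing $X(\beta)=g(X,grad\,\beta)$ and, by skew-symmetry of $\varphi$ and the hypothesis, $\varphi X(\alpha)=g(\varphi X,grad\,\alpha)=-g(X,\varphi(grad\,\alpha))=(2n-1)g(X,grad\,\beta)$, one gets $(2n-1)X(\beta)-\varphi X(\alpha)=0$, which is \eqref{e12}. Likewise \eqref{e10} collapses to $Q\xi=-2n(\alpha^{2}+\beta^{2})\xi+(2n-1)grad\,\beta+\varphi(grad\,\alpha)=-2n(\alpha^{2}+\beta^{2})\xi$, i.e. \eqref{e13}; alternatively \eqref{e13} is just the metric dual of \eqref{e12} obtained through \eqref{e11}. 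There is essentially no real obstacle here, this being a formal consequence of Proposition~\ref{pro2}; the only point needing a little care is the sign in the skew-symmetry of $\varphi$, since it is there that the para (rather than the Sasakian) convention built into \eqref{con} comes into play.
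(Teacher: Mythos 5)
Your proposal is correct and follows exactly the route the paper intends: the corollary's displayed chain of equalities is precisely the argument, resting on $g(\xi,\varphi(\mathrm{grad}\,\alpha))=-g(\varphi\xi,\mathrm{grad}\,\alpha)=0$ and then substitution of $\xi(\beta)=0$ together with the hypothesis into \eqref{e9} and \eqref{e10}. The paper omits the verification entirely, so your filling in of the skew-symmetry of $\varphi$ and the cancellation $(2n-1)X(\beta)-\varphi X(\alpha)=0$ is a faithful completion rather than a different proof.
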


From here on, we shall assume that  $\varphi(grad\alpha)=-(2n-1)grad\beta$.

The Weyl-projective curvature tensor $P$ is defined as
\begin{equation}\label{e14}
P(X,Y)Z=R(X,Y)Z-\frac{1}{2n}(Ric(Y,Z)X-Ric(X,Z)Y).
\end{equation}
Hence we can state the following
\begin{thm}\label{thm2}
A Weyl projectively flat trans-para-Sasakian manifold is an Einstein manifold.
\end{thm}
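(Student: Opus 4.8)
The plan is to unwind the hypothesis of Weyl projective flatness, $P(X,Y)Z=0$, which by \eqref{e14} is equivalent to
\[
R(X,Y)Z=\frac{1}{2n}\bigl(Ric(Y,Z)X-Ric(X,Z)Y\bigr),
\]
and then to probe this identity with $\xi$, comparing it against the curvature data already recorded in \thmref{thm1} and \corref{co1}. Throughout we keep the standing assumption $\varphi(grad\alpha)=-(2n-1)grad\beta$, so that $\xi(\beta)=0$ and the relations \eqref{e12} and \eqref{e6} take the simplified forms $Ric(\xi,Z)=-2n(\alpha^2+\beta^2)\eta(Z)$ and $R(\xi,Y)\xi=(\alpha^2+\beta^2)(Y-\eta(Y)\xi)$.

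First I would set $X=\xi$ in the identity above and insert \eqref{e12}, which gives
\[
R(\xi,Y)Z=\frac{1}{2n}\,Ric(Y,Z)\,\xi+(\alpha^2+\beta^2)\,\eta(Z)\,Y.
\]
Next I would take the inner product of both sides with $\xi$. On the right-hand side this produces $\frac{1}{2n}Ric(Y,Z)+(\alpha^2+\beta^2)\eta(Y)\eta(Z)$, using $g(\xi,\xi)=1$ and $g(Y,\xi)=\eta(Y)$. On the left-hand side, the skew-symmetry $g(R(\xi,Y)Z,\xi)=-g(R(\xi,Y)\xi,Z)$ together with the simplified \eqref{e6} yields $g(R(\xi,Y)Z,\xi)=-(\alpha^2+\beta^2)\bigl(g(Y,Z)-\eta(Y)\eta(Z)\bigr)$. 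Equating the two expressions, the $\eta(Y)\eta(Z)$ terms cancel and one is left with $Ric(Y,Z)=-2n(\alpha^2+\beta^2)\,g(Y,Z)$. Hence $Ric=\lambda g$ with $\lambda=-2n(\alpha^2+\beta^2)$; since $\dim M=2n+1\ge 3$, the contracted second Bianchi identity (Schur's lemma) forces $\lambda$, and therefore $\alpha^2+\beta^2$, to be a constant, so $M$ is an Einstein manifold.

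The computation itself is short, and I do not anticipate a genuine obstacle. The points that require care are keeping the curvature symmetry correct when descending to the scalar identity, and making sure that it is precisely the standing hypothesis $\varphi(grad\alpha)=-(2n-1)grad\beta$ that removes the gradient terms of \lemref{lem1}, so that \eqref{e6} may be used with $\xi(\beta)=0$. If one were content with an $\eta$-Einstein conclusion the dimension restriction could be dropped, but to obtain the stated \emph{Einstein} property one genuinely needs $2n+1>2$ so that Schur's lemma applies.
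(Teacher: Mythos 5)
Your proof is correct and follows essentially the same route as the paper: substitute $\xi$ into the projectively-flat curvature identity, use \eqref{e12} and the $\xi$-direction curvature from \lemref{lem1} (via \eqref{e6} with $\xi(\beta)=0$), and read off $Ric=-2n(\alpha^2+\beta^2)g$. Your closing appeal to Schur's lemma to make the Einstein constant genuinely constant is a point the paper leaves implicit, but the core computation is the same.
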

\begin{proof}
Suppose that $P=0$. Then from equation \eqref{e14}, we have
\begin{equation}\label{e15}
R(X,Y)Z=\frac{1}{2n}(Ric(Y,Z)X-Ric(X,Z)Y).
\end{equation}
From \eqref{e15}, we obtain
\begin{equation}\label{e16}
R(X,Y,Z,W)=\frac{1}{2n}(Ric(Y,Z)g(X,W)-Ric(X,Z)g(Y,W)).
\end{equation}
Putting $W=\xi$ in \eqref{e16}, we get
\begin{equation}\label{e17}
\eta(R(X,Y)Z)=\frac{1}{2n}(Ric(Y,Z)\eta(X)-Ric(X,Z)\eta(Y)).
\end{equation}
Again taking $X=\xi$, and using \eqref{e5} and \eqref{e12}, we get
\begin{equation}\label{e18}
Ric(X,Y)=-2n(\alpha^2+\beta^2)g(X,Y).
\end{equation}
\end{proof}
\begin{thm}\label{thm3}
A trans-para-Sasakian manifold satisfying $R(X,Y)P=0$ is an Einstein manifold and also it is a manifold of scalar curvature $scal=-2n(2n+1)(\alpha^2+\beta^2)$.
\end{thm}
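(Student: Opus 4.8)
The plan is to follow the pattern of the proof of \thmref{thm2}, but, since the hypothesis $R(X,Y)P=0$ is much weaker than $P=0$, to extract the Einstein condition by contracting the curvature condition with $\xi$ twice. Throughout I use the standing assumption $\varphi(grad\,\alpha)=-(2n-1)grad\,\beta$, which by \corref{co1} gives $\xi(\beta)=0$ together with \eqref{e12} and \eqref{e13}; in particular \eqref{e6} reads $R(\xi,X)\xi=(\alpha^2+\beta^2)(X-\eta(X)\xi)$.

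First I would record three preliminary identities for the Weyl-projective tensor $P$ of \eqref{e14}, all immediate consequences of \eqref{e6}, \eqref{e12}, \eqref{e13} and the symmetries of $R$:
$$P(\xi,X)\xi=0,\qquad \eta(P(U,V)\xi)=0,\qquad \eta(P(\xi,X)Z)=-(\alpha^2+\beta^2)g(X,Z)-\frac{1}{2n}Ric(X,Z).$$
The first two say that $P(\xi,\cdot)\xi$ and the $\xi$-component of $P(\cdot,\cdot)\xi$ vanish; the third expresses the $\xi$-component of $P(\xi,X)Z$ linearly in $Ric(X,Z)$ and $g(X,Z)$, which is what will carry the Ricci tensor at the end.

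Next I would write $R(X,Y)P=0$ in derivation form,
$$R(X,Y)(P(U,V)W)-P(R(X,Y)U,V)W-P(U,R(X,Y)V)W-P(U,V)(R(X,Y)W)=0,$$
and specialise it to $X=\xi$ and $U=W=\xi$. Because $P(\xi,V)\xi=0$ the first and third terms drop out, while the substitution $R(\xi,Y)\xi=(\alpha^2+\beta^2)(Y-\eta(Y)\xi)$ turns the remaining two into $-(\alpha^2+\beta^2)P(Y,V)\xi$ and $-(\alpha^2+\beta^2)P(\xi,V)Y$; hence
$$(\alpha^2+\beta^2)(P(Y,V)\xi+P(\xi,V)Y)=0.$$
Applying $\eta$ and inserting the three preliminary identities gives
$$(\alpha^2+\beta^2)(Ric(V,Y)+2n(\alpha^2+\beta^2)g(V,Y))=0,$$
so at every point where $\alpha^2+\beta^2\neq0$ --- the generic case; the para-cosymplectic locus $\alpha=\beta=0$ would need a short separate argument --- one obtains $Ric(V,Y)=-2n(\alpha^2+\beta^2)g(V,Y)$, i.e.\ $M$ is an Einstein manifold. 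Taking the $g$-trace of this relation (with $\dim M=2n+1$) then yields $scal=-2n(2n+1)(\alpha^2+\beta^2)$.

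The conceptual step is the specialisation $X=U=W=\xi$: it is arranged so that the ``bad'' terms carrying $grad\,\alpha$, $grad\,\beta$ and $\varphi$ never appear, which is possible exactly because $P(\xi,\cdot)\xi=0$ and $\eta(P(\cdot,\cdot)\xi)=0$ --- and these ultimately encode the constancy of the $\xi$-sectional curvature established earlier. Everything else is routine bookkeeping: checking the three preliminary identities and verifying that the derivation identity collapses as stated.
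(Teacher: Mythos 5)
Your proof is correct, and it reaches the same two key identities the paper relies on --- $\eta(P(U,V)\xi)=0$ and $\eta(P(\xi,X)Z)=-(\alpha^2+\beta^2)g(X,Z)-\tfrac{1}{2n}Ric(X,Z)$, which are exactly \eqref{e19} and \eqref{e20} --- but your endgame is genuinely different from the paper's. The paper sets only $X=\xi$ in the derivation identity, contracts with $\xi$ to get the long relation \eqref{e22}, and then sums over an orthonormal basis with $Y=U=e_i$; the scalar curvature enters there through the trace term $P(V,e_i,e_i,\xi)$, and the Einstein condition is extracted afterwards by putting $Z=\xi$. You instead specialise $U=W=\xi$ as well, so that $P(\xi,\cdot)\xi=0$ kills two of the four terms outright and the derivation identity collapses to $(\alpha^2+\beta^2)(P(Y,V)\xi+P(\xi,V)Y)=0$ before any trace is taken; applying $\eta$ then hands you the Einstein equation directly, and the scalar curvature follows by tracing that. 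This is cleaner: it avoids the basis summation entirely (where the paper's intermediate formulas \eqref{e23}--\eqref{e24} are in fact garbled), at the cost of making visible the overall factor $\alpha^2+\beta^2$. Your explicit caveat about the locus $\alpha^2+\beta^2=0$ is not a defect of your argument but an honest acknowledgement of a gap that the paper silently shares: its own passage from the display preceding \eqref{e22} to \eqref{e22} already divides by $\alpha^2+\beta^2$, since $g(R(\xi,Y)W,\xi)=-(\alpha^2+\beta^2)(g(Y,W)-\eta(Y)\eta(W))$. So your version is, if anything, the more careful of the two.
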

\begin{proof}
Using \eqref{e5} and \eqref{e12} in \eqref{e14}, we get
\begin{equation}\label{e19}
\eta(P(X,Y)\xi)=0
\end{equation}
and
\begin{equation}\label{e20}
\eta(P(\xi,Y)Z)=-(\alpha^2+\beta^2)g(Y,Z)-\frac{1}{2n}Ric(Y,Z)
\end{equation}
Now,
$$(R(X,Y)P(U,V)Z=R(X,Y)P(U,V)Z-P(R(X,Y)U,V)Z-P(U,R(X,Y)V)Z-$$
$$-P(U,V)R(X,Y)Z.$$
By assumption  $R(X,Y)P=0$, so we have
\begin{equation}\label{e21}
R(X,Y)P(U,V)Z-P(R(X,Y)U,V)Z-P(U,R(X,Y)V)Z-
\end{equation}
$$-P(U,V)R(X,Y)Z=0.$$
Therefore
$$g(R(\xi,Y)P(U,V)Z,\xi)-g(P(R(\xi,Y)U,V)Z,\xi)-g(P(U,R(\xi,Y)V)Z,\xi)-$$
$$-g(P(U,V)R(\xi,Y)Z,\xi)=0.$$
From this, it follows that,
\begin{equation}\label{e22}
-P(U,V,Z,Y)+\eta(Y)\eta(P(U,V)Z)-\eta(U)\eta(P(Y,V)Z)+
\end{equation}
$$+g(Y,U)\eta(P(\xi,V)Z)-\eta(V)\eta(P(U,Y)Z)+g(Y,V)\eta(P(U,\xi)Z)-$$
$$-\eta(Z)\eta(P(U,V)Y)=0.$$
Let $\{e_i\}$, $i=1,...,2n+1$ be an orthonormal basis. Then summing up for $1\leq i \leq 2n+1$ of the relation \eqref{e22} for $Y=U=e_i$ yields
\begin{equation}\label{e23}
2n\eta(P(\xi,V)Z)+\eta(Z)P(V,e_i,e_i,\xi)=0.
\end{equation}
From \eqref{e20}, we have
\begin{equation}\label{e24}
Ric(V,Z)=-2n(\alpha^2+\beta^2)g(Y,Z)-((2n+1)(\alpha^2+\beta^2)+\frac{scal}{2n}).
\end{equation}
Taking $Z=\xi$ in \eqref{e24} and using \eqref{e12} we obtain
\begin{equation}
scal=-2n(2n+1)(\alpha^2+\beta^2)\quad and\quad Ric(V,Z)=-2n(\alpha^2+\beta^2)g(Y,Z)
\end{equation}
\end{proof}
The Weyl-conformal tensor $C$ is defined by
\begin{equation}\label{e25}
C(X,Y)Z=R(X,Y)Z-\frac{1}{2n-1}(g(Y,Z)QX-g(X,Z)QY+Ric(Y,Z)X-
\end{equation}
$$-Ric(X,Z)Y)+\frac{scal}{2n(2n-1)}(g(Y,Z)X-g(X,Z)Y).$$
We have the following
\begin{thm}\label{thm4}
A conformally flat trans-para-Sasakian manifold is an $\eta-$Einstein manifold.
\end{thm}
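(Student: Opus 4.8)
The plan is to use the vanishing of $C$ to rewrite the Riemann tensor in terms of $Ric$, $scal$ and $g$ alone, and then to evaluate the resulting identity on the characteristic direction $\xi$, where Lemma~\ref{lem1} and Corollary~\ref{co1} supply everything that is needed. Setting $C=0$ in \eqref{e25} gives
\begin{multline*}
R(X,Y)Z=\tfrac{1}{2n-1}\big(g(Y,Z)QX-g(X,Z)QY+Ric(Y,Z)X-Ric(X,Z)Y\big)\\
-\tfrac{scal}{2n(2n-1)}\big(g(Y,Z)X-g(X,Z)Y\big).
\end{multline*}
I would lower the free index with $g$ and then substitute $Y=Z=\xi$. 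On the right-hand side one uses $g(\xi,\xi)=1$ and $g(\,\cdot\,,\xi)=\eta$, together with $Q\xi=-2n(\alpha^2+\beta^2)\xi$ and $Ric(\,\cdot\,,\xi)=-2n(\alpha^2+\beta^2)\eta$ from Corollary~\ref{co1} (these hold because of the standing hypothesis $\varphi(grad\,\alpha)=-(2n-1)grad\,\beta$, which in addition forces $\xi(\beta)=0$); this collapses the right-hand side to a nonzero multiple of $Ric(X,W)$ plus scalar multiples of $g(X,W)$ and $\eta(X)\eta(W)$ whose coefficients involve only $\alpha^2+\beta^2$ and $scal$.

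For the left-hand side I would use $R(X,\xi,\xi,W)=-R(\xi,X,\xi,W)=-g(R(\xi,X)\xi,W)$, which by \eqref{e6} (with $\xi(\beta)=0$) equals $-(\alpha^2+\beta^2)(g(X,W)-\eta(X)\eta(W))$. The essential observation is that contracting against $\xi$ in \emph{both} the second and the third slot makes all of the awkward terms of \eqref{e5} (the $X(\alpha)\varphi Y$ and $Y(\beta)\varphi^2X$ terms, and the $\alpha\beta$ term) drop out, so that only the clean formula \eqref{e6} remains; with just one $\xi$ inserted, as in \eqref{e5} or \eqref{e8}, those terms survive and the $\eta$-Einstein structure stays hidden. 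Equating the two sides and solving for $Ric(X,W)$ then yields an identity of the form
\begin{multline*}
Ric(X,W)=\Big(\alpha^2+\beta^2+\tfrac{scal}{2n}\Big)g(X,W)\\
-\Big((2n+1)(\alpha^2+\beta^2)+\tfrac{scal}{2n}\Big)\eta(X)\eta(W),
\end{multline*}
which is exactly the $\eta$-Einstein condition; as a consistency check, putting $W=\xi$ recovers \eqref{e12}.

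There is no genuine obstacle here beyond bookkeeping: once $Y=Z=\xi$ is substituted and \eqref{e6}, \eqref{e12}, \eqref{e13} are invoked, the rest is linear algebra in the coefficients. It is worth noting, however, that — unlike in Theorems~\ref{thm2} and \ref{thm3} — the scalar curvature is not pinned down by this argument, so $\eta$-Einstein, rather than Einstein, is the natural and sharp conclusion.
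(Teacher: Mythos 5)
Your proposal is correct and follows essentially the same route as the paper: set $C=0$ in \eqref{e25}, contract twice against $\xi$ (your $R(X,\xi,\xi,W)$ equals the paper's $\eta(R(\xi,Y)Z)$ after relabelling, by the pair symmetry of $R$), and use \eqref{e6}, \eqref{e12}, \eqref{e13} to solve for $Ric$, arriving at exactly \eqref{e28}. Your closing remark that the scalar curvature is not determined here, so that $\eta$-Einstein is the sharp conclusion, is also accurate.
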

\begin{proof}
Suppose that $C=0$. Then from \eqref{e25}, we get
\begin{equation}\label{e26}
R(X,Y)Z=\frac{1}{2n-1}(g(Y,Z)QX-g(X,Z)QY+Ric(Y,Z)X-
\end{equation}
$$-Ric(X,Z)Y)-\frac{scal}{2n(2n-1)}(g(Y,Z)X-g(X,Z)Y).$$
From the identity \eqref{e26}, we have
\begin{equation}\label{e27}
\eta(R(X,Y)Z)=\frac{1}{2n-1}(g(Y,Z)Ric(X,\xi)-g(X,Z)Ric(Y,\xi)+\eta(X)Ric(Y,Z)-
\end{equation}
$$-\eta(Y)Ric(X,Z))-\frac{scal}{2n(2n-1)}(g(Y,Z)\eta(X)-g(X,Z)\eta(Y)).$$
Again taking $X=\xi$ in \eqref{e27}, and using \eqref{e5} and \eqref{e12} we get
\begin{equation}\label{e28}
Ric(X,Y)=((\alpha^2+\beta^2)+\frac{scal}{2n})g(Y,Z)-((2n+1)(\alpha^2+\beta^2)+\frac{scal}{2n})\eta(X)\eta(Y).
\end{equation}
\end{proof}
\begin{thm}\label{thm5}
A trans-para-Sasakian manifold satisfying $R(X,Y)C=0$ is an $\eta-$Einstein manifold.
\end{thm}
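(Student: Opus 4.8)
The plan is to run the argument used for \thmref{thm3}, with the Weyl conformal tensor $C$ of \eqref{e25} replacing the Weyl projective tensor $P$. The preliminary step is to compute the two $\xi$-contractions of $C$. Substituting \eqref{e5}, \eqref{e12} and \eqref{e13} into \eqref{e25} and using $\eta\circ\varphi=0$, $\varphi\xi=0$ and $\varphi^{2}=\id-\eta\otimes\xi$, one finds the analogues of \eqref{e19} and \eqref{e20}: namely $\eta(C(X,Y)\xi)=0$, and an identity expressing $\eta(C(\xi,Y)Z)$ as a linear combination of $g(Y,Z)$, $\eta(Y)\eta(Z)$ and $Ric(Y,Z)$ with coefficients built from $\alpha^{2}+\beta^{2}$ and $scal$; in fact the equation $\eta(C(\xi,Y)Z)=0$ is, after multiplication by $-(2n-1)$, precisely the relation \eqref{e28} obtained in the proof of \thmref{thm4}. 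Thus it is enough to deduce $\eta(C(\xi,Y)Z)=0$ from the hypothesis $R(X,Y)C=0$.

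To do this I would expand the condition that $R(X,Y)$, acting as a derivation, annihilates $C$, i.e. $R(X,Y)C(U,V)Z-C(R(X,Y)U,V)Z-C(U,R(X,Y)V)Z-C(U,V)R(X,Y)Z=0$, put $X=\xi$, and take the $g(\cdot,\xi)$-component. For the first term, the skew-symmetry of $R$ together with \eqref{e6} (and $\xi(\beta)=0$, valid under the standing assumption by \corref{co1}) gives $g(R(\xi,Y)C(U,V)Z,\xi)=-(\alpha^{2}+\beta^{2})\big(g(C(U,V)Z,Y)-\eta(Y)\eta(C(U,V)Z)\big)$; for the other three terms one substitutes $R(\xi,Y)$ from \eqref{e8}. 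Assuming $\alpha^{2}+\beta^{2}\neq0$ and dividing by $-(\alpha^{2}+\beta^{2})$, this reduces — just as \eqref{e22} is obtained in \thmref{thm3} — to the exact analogue of \eqref{e22} with $C$ in place of $P$, up to extra terms carrying the factors $2\alpha\beta$, $\mathrm{grad}\,\alpha$, $\mathrm{grad}\,\beta$ coming from the non-constancy of $\alpha,\beta$ in \eqref{e8}.

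Next I would contract over an orthonormal basis $\{e_{i}\}$, setting $Y=U=e_{i}$ and summing with the signs $\epsilon_{i}$, mimicking the passage from \eqref{e22} to \eqref{e23}. Since $C$ has the algebraic symmetries of the Riemann tensor and is \emph{totally} trace-free — which is immediate from \eqref{e25} — the term $\eta(Z)\sum_{i}\epsilon_{i}C(V,e_{i},e_{i},\xi)$, i.e. the analogue of the term that persisted in \eqref{e23}, now vanishes, and one is left with $2n\,\eta(C(\xi,V)Z)=0$. Hence $\eta(C(\xi,V)Z)=0$, and feeding this back into the first step (equivalently, repeating the last lines of the proof of \thmref{thm4}) yields $Ric(V,Z)=\big(\alpha^{2}+\beta^{2}+\tfrac{scal}{2n}\big)g(V,Z)-\big((2n+1)(\alpha^{2}+\beta^{2})+\tfrac{scal}{2n}\big)\eta(V)\eta(Z)$, so the manifold is $\eta$-Einstein.

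The main obstacle is the bookkeeping hidden in the middle step: one must control the contributions of the $\varphi$-dependent and gradient-dependent pieces of $R(\xi,Y)$ in \eqref{e8} when they are inserted into the slots of $C$, and show that, after the basis contraction, they cancel. The relevant tools are $\varphi\xi=0$, $\tr\varphi=0$, the compatibility identity \eqref{con}, the relations \eqref{e6}--\eqref{e7}, $\xi(\beta)=0$, and the standing assumption $\varphi(\mathrm{grad}\,\alpha)=-(2n-1)\mathrm{grad}\,\beta$; keeping track of the signs throughout is the delicate point. (If $\alpha^{2}+\beta^{2}\equiv0$ the division above is illegitimate, but then $\alpha=\beta=0$, $R(\xi,Y)=0$, and the hypothesis is vacuous, so that degenerate case would be treated separately.)
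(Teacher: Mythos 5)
Your proposal follows essentially the same route as the paper: establish $\eta(C(X,Y)\xi)=0$ and the expression \eqref{e29} for $\eta(C(\xi,Y)Z)$, expand the derivation condition $R(X,Y)C=0$ with $X=\xi$ against $\xi$, contract over an orthonormal basis with $Y=U=e_i$ to get $\eta(C(\xi,V)Z)=0$ as in \eqref{e32}, and feed this back into \eqref{e29} to obtain the $\eta$-Einstein identity \eqref{e33}. Your extra care about the trace-free property of $C$, the gradient terms in \eqref{e8}, and the degenerate case $\alpha^2+\beta^2=0$ goes beyond what the paper records, but the argument is the same one.
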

\begin{proof}
From identity \eqref{e25}, we have $\eta(C(X,Y)\xi)=0$ and
\begin{equation}\label{e29}
\eta(C(\xi,Y)Z)=\frac{1}{2n-1}((\alpha^2+\beta^2)+\frac{scal}{2n})(g(Y,Z)-\eta(Y)\eta(Z))-
\end{equation}
$$-\frac{1}{2n-1}(Ric(Y,Z)+2n(\alpha^2+\beta^2)\eta(Y)\eta(Z)).$$
Now,
$$(R(X,Y)C(U,V)Z=R(X,Y)C(U,V)Z-C(R(X,Y)U,V)Z-C(U,R(X,Y)V)Z-$$
$$-C(U,V)R(X,Y)Z.$$
By assumption  $R(X,Y)C=0$, so we have
\begin{equation}\label{e30}
R(X,Y)C(U,V)Z-C(R(X,Y)U,V)Z-C(U,R(X,Y)V)Z-
\end{equation}
$$-C(U,V)R(X,Y)Z=0.$$
Therefore
$$g(R(\xi,Y)C(U,V)Z,\xi)-g(C(R(\xi,Y)U,V)Z,\xi)-g(C(U,R(\xi,Y)V)Z,\xi)-$$
$$-g(C(U,V)R(\xi,Y)Z,\xi)=0.$$
From this, it follows that,
\begin{equation}\label{e31}
-C(U,V,Z,Y)+\eta(Y)\eta(C(U,V)Z)-\eta(U)\eta(C(Y,V)Z)+
\end{equation}
$$+g(Y,U)\eta(C(\xi,V)Z)-\eta(V)\eta(C(U,Y)Z)+g(Y,V)\eta(C(U,\xi)Z)-$$
$$-\eta(Z)\eta(C(U,V)Y)=0.$$
Let $\{e_i\}$, $i=1,...,2n+1$ be an orthonormal basis. Then summing up for $1\leq i \leq 2n+1$ of the relation \eqref{e31} for $Y=U=e_i$ yields
\begin{equation}\label{e32}
\eta(C(\xi,V)Z)=0.
\end{equation}
From \eqref{e29}, we have
\begin{equation}\label{e33}
Ric(Y,Z)=(\frac{scal}{2n}+(\alpha^2+\beta^2))g(Y,Z)-((2n+1)(\alpha^2+\beta^2)+\frac{scal}{2n})\eta(Y)\eta(Z).
\end{equation}
\end{proof}
The concicular curvature tensor $\overline{C}$ is defined by
\begin{equation}\label{e34}
\overline{C}(X,Y)Z=R(X,Y)Z-\frac{scal}{2n(2n+1)}(g(Y,Z)X-g(X,Z)Y).
\end{equation}
We have the following
\begin{thm}\label{thm6}
A trans-para-Sasakian manifold satisfying $R(X,Y)\overline{C}=0$ is an Einstein manifold and a manifold of scalar curvature $scal=-2n(2n-1)(\alpha^2+\beta^2)$.
\end{thm}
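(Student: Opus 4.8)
The plan is to reproduce, essentially verbatim, the scheme already used for \thmref{thm3} and \thmref{thm5}. The concircular tensor \eqref{e34} differs from the projective tensor \eqref{e14} and the conformal tensor \eqref{e25} only by a scalar multiple of $g(Y,Z)X-g(X,Z)Y$, and conclusions of ``$R\cdot T=0$'' type depend only on the behaviour of $T$ along $\xi$, together with \eqref{e5}, \eqref{e6} and \eqref{e12}--\eqref{e13}. Throughout I keep the standing assumption $\varphi(grad\alpha)=-(2n-1)grad\beta$, so that $\xi(\beta)=0$ by \corref{co1}.

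First I would record the behaviour of $\overline{C}$ along $\xi$. From \eqref{e34}, the skew symmetry $R(X,Y,\xi,\xi)=0$, and \eqref{e6}, one obtains
\[
\eta(\overline{C}(X,Y)\xi)=0,\qquad
\eta(\overline{C}(\xi,Y)Z)=-\Bigl((\alpha^{2}+\beta^{2})+\tfrac{scal}{2n(2n+1)}\Bigr)\bigl(g(Y,Z)-\eta(Y)\eta(Z)\bigr),
\]
the second using $\eta(R(\xi,Y)Z)=-(\alpha^{2}+\beta^{2})(g(Y,Z)-\eta(Y)\eta(Z))$, i.e. \eqref{e6} read through the pair symmetry $R(\xi,Y,Z,\xi)=R(Z,\xi,\xi,Y)$.

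Next I would expand the hypothesis $R(X,Y)\overline{C}=0$ in derivation form,
\[
R(X,Y)\overline{C}(U,V)Z-\overline{C}(R(X,Y)U,V)Z-\overline{C}(U,R(X,Y)V)Z-\overline{C}(U,V)R(X,Y)Z=0,
\]
set $X=\xi$, contract the resulting four-term identity with $\xi$, feed in \eqref{e5}/\eqref{e8} for the action of $R(\xi,\cdot)$, and cancel the common factor $\alpha^{2}+\beta^{2}$ (the locus $\alpha^{2}+\beta^{2}\equiv 0$ being trivial, as then $\alpha\equiv\beta\equiv 0$ and the space is flat). This should produce the algebraic identity of exactly the shape of \eqref{e22} and \eqref{e31},
$$-\overline{C}(U,V,Z,Y)+\eta(Y)\eta(\overline{C}(U,V)Z)-\eta(U)\eta(\overline{C}(Y,V)Z)+g(Y,U)\eta(\overline{C}(\xi,V)Z)-$$
$$-\eta(V)\eta(\overline{C}(U,Y)Z)+g(Y,V)\eta(\overline{C}(U,\xi)Z)-\eta(Z)\eta(\overline{C}(U,V)Y)=0.$$
Summing this over an orthonormal frame $\{e_{i}\}$ with $Y=U=e_{i}$, the two ``middle'' terms cancel, the $g(Y,U)$- and $g(Y,V)$-terms collapse to the known expression $\eta(\overline{C}(\xi,V)Z)$ of Step~1, the term $\eta(Z)\eta(\overline{C}(U,V)Y)$ becomes a multiple of $\eta(Z)\eta(V)$ via \eqref{e12}, and the first term produces the double trace of $\overline{C}$, which --- unlike that of $P$ --- is not zero but equals $Ric(V,Z)-\tfrac{scal}{2n+1}g(V,Z)$. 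Solving for $Ric$ yields $Ric(V,Z)=a\,g(V,Z)+b\,\eta(V)\eta(Z)$ with $a,b$ explicit in $\alpha^{2}+\beta^{2}$ and $scal$; putting $Z=\xi$ and comparing with \eqref{e12} forces $b=0$, so the manifold is Einstein with $Ric=-2n(\alpha^{2}+\beta^{2})g$, and its scalar curvature is then read off by taking the metric trace.

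The main obstacle is the passage to the displayed intermediate identity: one must substitute the lengthy formula \eqref{e8} for $R(\xi,\cdot)$ into the three terms in which $R(\xi,\cdot)$ enters a slot of $\overline{C}$, and check that the contributions of $\varphi$, $grad\alpha$ and $grad\beta$ do not survive the contraction with $\xi$ --- equivalently, that in that computation $R(\xi,Y)U$ may be replaced by $-(\alpha^{2}+\beta^{2})(g(U,Y)\xi-\eta(U)Y)$; the few such contributions that do survive should then wash out when the identity is traced against the frame, using $\tr\varphi=0$ and $\varphi\xi=0$. Once that identity is secured, the contraction over $\{e_{i}\}$ and the reading-off of $a$, $b$ and $scal$ are the same routine manipulations already carried out for \thmref{thm3} and \thmref{thm5}.
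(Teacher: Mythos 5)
Your proposal follows essentially the same route as the paper's proof: the same preliminary identities $\eta(\overline{C}(X,Y)\xi)=0$ and \eqref{e35} (your sign for $\eta(\overline{C}(\xi,Y)Z)$ is in fact the correct one), the same derivation identity \eqref{e36} contracted with $\xi$ to yield \eqref{e37}, and the same trace over an orthonormal frame with $Y=U=e_i$, ending with \eqref{e12} to extract $Ric=-2n(\alpha^2+\beta^2)g$. The only caveat concerns the scalar curvature: the metric trace of that Einstein equation gives $scal=-2n(2n+1)(\alpha^2+\beta^2)$, not the stated $-2n(2n-1)(\alpha^2+\beta^2)$ (the two agree only if $\alpha^2+\beta^2=0$); this inconsistency is already present in the paper's own statement and proof and is not introduced by your argument.
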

\begin{proof}
From equality \eqref{e34}, we have $\eta(\overline{C}(X,Y)\xi)=0$ and
\begin{equation}\label{e35}
\eta(\overline{C}(\xi,Y)Z)=(-\frac{scal}{2n(2n+1)}+(\alpha^2+\beta^2))(g(Y,Z)-\eta(Y)\eta(Z)).
\end{equation}
Now,
$$(R(X,Y)\overline{C}(U,V)Z=R(X,Y)\overline{C}(U,V)Z-\overline{C}(R(X,Y)U,V)Z-\overline{C}(U,R(X,Y)V)Z-$$
$$-\overline{C}(U,V)R(X,Y)Z.$$
By assumption $R(X,Y)\overline{C}=0$, so we have
\begin{equation}\label{e36}
R(X,Y)\overline{C}(U,V)Z-\overline{C}(R(X,Y)U,V)Z-\overline{C}(U,R(X,Y)V)Z-
\end{equation}
$$-\overline{C}(U,V)R(X,Y)Z=0.$$
Therefore
$$g(R(\xi,Y)\overline{C}(U,V)Z,\xi)-g(\overline{C}(R(\xi,Y)U,V)Z,\xi)-g(\overline{C}(U,R(\xi,Y)V)Z,\xi)-$$
$$-g(\overline{C}(U,V)R(\xi,Y)Z,\xi)=0.$$
From this, it follows that,
\begin{equation}\label{e37}
-\overline{C}(U,V,Z,Y)+\eta(Y)\eta(\overline{C}(U,V)Z)-\eta(U)\eta(\overline{C}(Y,V)Z)+
\end{equation}
$$+g(Y,U)\eta(\overline{C}(\xi,V)Z)-\eta(V)\eta(\overline{C}(U,Y)Z)+g(Y,V)\eta(\overline{C}(U,\xi)Z)-$$
$$-\eta(Z)\eta(\overline{C}(U,V)Y)=0.$$
Let $\{e_i\}$, $i=1,...,2n+1$ be an orthonormal basis. Then summing up for $1\leq i \leq 2n+1$ of the relation \eqref{e37} for $Y=U=e_i$ yields
\begin{equation}\label{e38}
-Ric(V,Z)+\frac{scal}{2n+1}g(V,Z)-2n(\alpha^2+\beta^2)(g(V,Z)-\eta(V)\eta(Z))-
\end{equation}
$$-\frac{scal}{2n+1}(g(V,Z)-\eta(V)\eta(Z))+\eta(Z)Ric(V,\xi)-\frac{scal}{2n+1}\eta(V)\eta(Z).$$
Using \eqref{e12} in \eqref{e38}, we have
\begin{equation}\label{e39}
Ric(Y,Z)=-2n(\alpha^2+\beta^2)g(Y,Z)
\end{equation}
and $scal=-2n(2n-1)(\alpha^2+\beta^2).$
\end{proof}
The projective Ricci tensor is defined by
\begin{equation}\label{e40}
\widetilde{P}(X,Y)=\frac{(2n+1)}{2n}Ric(X,Y)-\frac{scal}{2n}g(X,Y).
\end{equation}
We have the following
\begin{thm}\label{thm7}
A trans-para-Sasakian manifold satisfying $R(X,Y)\widetilde{P}=0$ is an Einstein manifold and a manifold of scalar curvature $scal=-2n(2n+1)(\alpha^2+\beta^2)$.
\end{thm}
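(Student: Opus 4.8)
The plan is to run the same scheme as in the proof of \thmref{thm6}, adapted to the fact that the tensor being annihilated is now the symmetric $(0,2)$-tensor $\widetilde{P}$ rather than a $(1,3)$-tensor. For such a tensor the hypothesis $R(X,Y)\widetilde{P}=0$ means
\begin{equation*}
\widetilde{P}(R(X,Y)U,V)+\widetilde{P}(U,R(X,Y)V)=0
\end{equation*}
for all vector fields $X,Y,U,V$. The first step is to record the value of $\widetilde{P}$ in the $\xi$-direction: inserting \eqref{e12} into \eqref{e40} gives $\widetilde{P}(X,\xi)=\mu\,\eta(X)$, where $\mu:=-(2n+1)(\alpha^2+\beta^2)-\frac{scal}{2n}$.

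Next I would specialize the displayed identity to $X=\xi$ and $V=\xi$, obtaining
\begin{equation*}
\widetilde{P}(R(\xi,Y)U,\xi)+\widetilde{P}(U,R(\xi,Y)\xi)=0 .
\end{equation*}
Two facts from the preceding results go in here. Under the standing assumption $\varphi(grad\alpha)=-(2n-1)grad\beta$, \corref{co1} gives $\xi(\beta)=0$, so \eqref{e6} becomes $R(\xi,Y)\xi=(\alpha^2+\beta^2)(Y-\eta(Y)\xi)$; and the symmetries of $R$ together with the same formula give $\eta\bigl(R(\xi,Y)U\bigr)=g\bigl(R(\xi,Y)U,\xi\bigr)=-(\alpha^2+\beta^2)\bigl(g(Y,U)-\eta(Y)\eta(U)\bigr)$. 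Substituting these together with $\widetilde{P}(\cdot,\xi)=\mu\,\eta(\cdot)$, the two $\eta\otimes\eta$ contributions cancel and one is left with
\begin{equation*}
(\alpha^2+\beta^2)\bigl(\widetilde{P}(U,Y)-\mu\,g(U,Y)\bigr)=0 .
\end{equation*}

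At points where $\alpha^2+\beta^2\neq0$ this forces $\widetilde{P}(U,Y)=\mu\,g(U,Y)$; expanding $\widetilde{P}$ via \eqref{e40} and observing that the $\frac{scal}{2n}g$-terms cancel, this reduces to $Ric(U,Y)=-2n(\alpha^2+\beta^2)g(U,Y)$, i.e.\ $M$ is Einstein. Tracing against an orthonormal basis $\{e_i\}$ and using $\sum_i\epsilon_i g(e_i,e_i)=2n+1$ then yields $scal=-2n(2n+1)(\alpha^2+\beta^2)$, exactly as claimed. The argument is entirely linear-algebraic once \lemref{lem1}, \thmref{thm1} and \corref{co1} are in hand; the only points requiring care are the cancellation of the $\eta\otimes\eta$ terms in the penultimate display and, strictly speaking, the degenerate locus where $\alpha^2+\beta^2$ vanishes identically (the para-cosymplectic case), which should be treated separately.
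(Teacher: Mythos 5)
Your proposal is correct and follows essentially the same route as the paper: apply the derivation identity $\widetilde{P}(R(X,Y)U,V)+\widetilde{P}(U,R(X,Y)V)=0$ with $\xi$ inserted in two slots, use \eqref{e6} (with $\xi(\beta)=0$ from \corref{co1}) to reduce to $(\alpha^2+\beta^2)\bigl(\widetilde{P}(U,Y)-\mu\,g(U,Y)\bigr)=0$, and then unwind \eqref{e40}. You are in fact slightly more careful than the paper, both in flagging the degenerate locus $\alpha^2+\beta^2=0$ (which the paper's own \eqref{e42} equally requires) and in obtaining the scalar curvature $scal=-2n(2n+1)(\alpha^2+\beta^2)$ consistent with the theorem statement, where the paper's final line has a sign and coefficient slip.
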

\begin{proof}
From the identity $R(X,Y)\widetilde{P}=0$, we get
\begin{equation}\label{e41}
\widetilde{P}(R(X,Y)U,V)+\widetilde{P}(U,R(X,Y)V)=0.
\end{equation}
Putting $X=U=\xi$ and using \eqref{e5} and \eqref{e41} we have
\begin{equation}\label{e42}
-(\alpha^2+\beta^2)(\eta(Y)\widetilde{P}(\xi,V)+g(Y,V)\widetilde{P}(\xi,\xi)-\widetilde{P}(Y,V)-\eta(V)\widetilde{P}(\xi,Y))=0.
\end{equation}
Using \eqref{e41} in \eqref{e42}, we obtain that $Ric(X,Y)=-2n(\alpha^2+\beta^2)g(X,Y)$ and $scal=2n(2n-1)(\alpha^2+\beta^2)$.
\end{proof}
The pseudo-projective curvature tensor is defined by
\begin{equation}\label{e43}
\overline{P}(X,Y)Z=aR(X,Y)Z+b(Ric(Y,Z)X-Ric(X,Z)Y)-
\end{equation}
$$-\frac{(a+2nb)scal}{2n(2n+1)}(g(Y,Z)X-g(X,Z)Y),$$
where $a,b$ are constants such that $a,b\neq 0$.

We have the following
\begin{thm}\label{thm7}
If a trans-para-Sasakian manifold is pseudo-projectively flat, then it is an Einstein manifold and a manifold of scalar curvature $scal=-2n(2n+1)(\alpha^2+\beta^2)$.
\end{thm}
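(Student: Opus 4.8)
The plan is to follow the pattern of the proofs of Theorem~\ref{thm2} and Theorem~\ref{thm4}: use the vanishing of $\overline{P}$ to express the curvature tensor algebraically through $Ric$ and $g$, and then contract against $\xi$ with the help of the identities already in place. Setting $\overline{P}=0$ in \eqref{e43} and dividing by $a\neq 0$, I would first obtain
\[
R(X,Y)Z=-\frac{b}{a}\bigl(Ric(Y,Z)X-Ric(X,Z)Y\bigr)+\frac{(a+2nb)\,scal}{2na(2n+1)}\bigl(g(Y,Z)X-g(X,Z)Y\bigr).
\]
Putting $X=\xi$, applying $\eta$ to both sides, and inserting $Ric(\xi,Z)=-2n(\alpha^2+\beta^2)\eta(Z)$ from \eqref{e12}, this gives one expression for $\eta(R(\xi,Y)Z)$ that is linear in $Ric(Y,Z)$, $g(Y,Z)$ and $\eta(Y)\eta(Z)$.

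Independently, the standing assumption $\varphi(grad\alpha)=-(2n-1)grad\beta$ forces $\xi(\beta)=0$ by Corollary~\ref{co1}, so \eqref{e6} reduces to $R(\xi,Z)\xi=(\alpha^2+\beta^2)(Z-\eta(Z)\xi)$; combining this with the pair symmetry $R(\xi,Y,Z,\xi)=R(Z,\xi,\xi,Y)$ of the curvature tensor yields the second identity $\eta(R(\xi,Y)Z)=-(\alpha^2+\beta^2)\bigl(g(Y,Z)-\eta(Y)\eta(Z)\bigr)$. Equating the two computations of $\eta(R(\xi,Y)Z)$ and solving for the Ricci tensor will produce an $\eta$-Einstein relation $Ric(Y,Z)=\mu\,g(Y,Z)-\bigl(\mu+2n(\alpha^2+\beta^2)\bigr)\eta(Y)\eta(Z)$, where $\mu$ is an explicit affine function of $scal$. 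I would then take the metric trace over an orthonormal frame $\{e_i\}$, keeping track of the signs $\epsilon_i=g(e_i,e_i)$ and of $\sum_i\epsilon_i\eta(e_i)^2=g(\xi,\xi)=1$, which gives $scal=2n\bigl(\mu-(\alpha^2+\beta^2)\bigr)$; substituting the definition of $\mu$ and simplifying then pins down $scal=-2n(2n+1)(\alpha^2+\beta^2)$. Feeding this value back makes the coefficient $\mu+2n(\alpha^2+\beta^2)$ of $\eta(Y)\eta(Z)$ vanish, leaving $Ric(Y,Z)=-2n(\alpha^2+\beta^2)g(Y,Z)$, so $M$ is Einstein with the asserted scalar curvature.

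The steps are computationally routine; the only delicate point is the algebra in the trace step, where a factor $b-a$ cancels, so the argument as sketched needs $a\neq b$ (which is implicit in the usual definition of $\overline{P}$). In the borderline case $a=b$ the scalar-curvature relation degenerates and the Ricci tensor stays only $\eta$-Einstein a priori; dealing with that case, if one insists on it, would require an extra contraction to kill the $\eta\otimes\eta$ term. I expect this bookkeeping, rather than any conceptual issue, to be the main obstacle.
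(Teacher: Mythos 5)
Your proposal is correct and follows essentially the same route as the paper: contract \eqref{e44} with $\xi$, put $X=\xi$, and use \eqref{e5} (equivalently \eqref{e6}) together with \eqref{e12} to reduce the flatness condition to an $\eta$-Einstein relation. The only divergence is at the end: in the paper's \eqref{e46} the term $-\frac{(a+2nb)\,scal}{2n(2n+1)}$ has already been replaced by $(a+2nb)(\alpha^2+\beta^2)$, i.e.\ the yet-to-be-proved value of $scal$ is tacitly inserted so that the Einstein condition drops out at once, whereas your closing trace computation derives that value honestly --- at the price of the hypothesis $a\neq b$, which you rightly flag and which the paper never states.
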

\begin{proof}
Suppose that $\overline{P}(X,Y)Z=0$, then from \eqref{e43}, we get
\begin{equation}\label{e44}
aR(X,Y)Z+b(Ric(Y,Z)X-Ric(X,Z)Y)-
\end{equation}
$$-\frac{(a+2nb)scal}{2n(2n+1)}(g(Y,Z)X-g(X,Z)Y)=0.$$
Taking the inner product on both sides of \eqref{e44} by $\xi$, we get
\begin{equation}\label{e45}
a\eta(R(X,Y)Z)+b(Ric(Y,Z)\eta(X)-Ric(X,Z)\eta(Y))-
\end{equation}
$$-\frac{(a+2nb)scal}{2n(2n+1)}(g(Y,Z)\eta(X)-g(X,Z)\eta(Y))=0.$$
Putting $X=\xi$ and using \eqref{e5} and \eqref{e12} in \eqref{e45}, we get
\begin{equation}\label{e46}
-a(\alpha^2+\beta^2)(g(Y,Z)-\eta(Y)\eta(Z))+b(Ric(Y,Z)+2n(\alpha^2+\beta^2)\eta(Y)\eta(Z))+
\end{equation}
$$+(a+2nb)(\alpha^2+\beta^2)(g(Y,Z)-\eta(Y)\eta(Z))=0.$$
From the identity \eqref{e46}, we obtain that $Ric(X,Y)=-2n(\alpha^2+\beta^2)g(Y,Z)$ and $scal=-2n(2n+1)(\alpha^2+\beta^2)$.
\end{proof}
\begin{thm}\label{thm8}
A trans-para-Sasakian manifold is satisfying the relation $R(X,Y)\overline{P}=0$ is an Einstein manifold and a manifold of scalar curvature $scal=-2n(2n+1)(\alpha^2+\beta^2)$.
\end{thm}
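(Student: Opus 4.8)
The plan is to run the same machine that produced \thmref{thm5} and \thmref{thm6}, combined with the algebra of the pseudo-projectively flat case already treated above. \emph{Step 1: algebraic preparation.} Using \eqref{e5} (equivalently \eqref{e8}) together with \eqref{e12} in the definition \eqref{e43}, I would first record the two contractions
\[
\eta(\overline{P}(X,Y)\xi)=0,
\]
\[
\eta(\overline{P}(\xi,Y)Z)=\lambda\big(g(Y,Z)-\eta(Y)\eta(Z)\big)+b\,Ric(Y,Z)+2nb(\alpha^2+\beta^2)\eta(Y)\eta(Z),
\]
where $\lambda=-a(\alpha^2+\beta^2)-\dfrac{(a+2nb)\,scal}{2n(2n+1)}$. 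These play the role that \eqref{e29} and \eqref{e35} played in the earlier arguments.

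\emph{Step 2: the derivation identity.} I would then expand the hypothesis in the usual way,
\[
R(X,Y)\overline{P}(U,V)Z-\overline{P}(R(X,Y)U,V)Z-\overline{P}(U,R(X,Y)V)Z-\overline{P}(U,V)R(X,Y)Z=0,
\]
put $X=\xi$, take the $g(\cdot,\xi)$-component, and substitute the explicit value of $R(\xi,\cdot)\cdot$ from \eqref{e8} wherever it occurs. Exactly as in \eqref{e22}, \eqref{e31} and \eqref{e37}, this produces a scalar identity expressing $\overline{P}(U,V,Z,Y)$ through the $\eta$-contractions of $\overline{P}$ written down in Step~1.

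\emph{Step 3: trace and conclusion.} Contracting this identity over an orthonormal frame $\{e_i\}_{i=1}^{2n+1}$ with $Y=U=e_i$ and summing (with the signs $\epsilon_i=g(e_i,e_i)=\pm1$ dictated by the signature $(n+1,n)$), the trace of $R$ turns into $Ric$, the trace of $Ric$ into $scal$, and $\sum_i\epsilon_i g(e_i,e_i)=2n+1$; dividing out the resulting nonzero multiple of $a$ leaves $\eta(\overline{P}(\xi,V)Z)=0$, just as \eqref{e32} and \eqref{e38} were obtained. Feeding this back into the formula of Step~1 and collecting the coefficients of $g(V,Z)$ and of $\eta(V)\eta(Z)$, the $\eta(V)\eta(Z)$ part drops out (precisely the cancellation seen in \eqref{e46}), and division by $b\neq 0$ gives $Ric(V,Z)=-2n(\alpha^2+\beta^2)g(V,Z)$; tracing this last identity and using \eqref{e12} yields $scal=-2n(2n+1)(\alpha^2+\beta^2)$.

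The only genuine work lies in Step~3: one has to verify that the free constants $a,b$ organise themselves so that the contracted identity may legitimately be divided through to isolate $\eta(\overline{P}(\xi,V)Z)$, and that the $\eta(V)\eta(Z)$ contribution in the final Ricci identity vanishes — this last point is what upgrades the conclusion from merely $\eta$-Einstein (as in \thmref{thm5}) to Einstein. Throughout I use the standing hypothesis $\varphi(grad\,\alpha)=-(2n-1)grad\,\beta$, which via \corref{co1} gives $\xi(\beta)=0$ and hence the clean forms \eqref{e12}--\eqref{e13} invoked above.
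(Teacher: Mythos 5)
Your overall strategy --- expand the derivation identity, contract over $Y=U=e_i$, and extract the Ricci tensor --- is exactly the paper's, and Steps 1 and 2 reproduce \eqref{e47}--\eqref{e48}. The gap is in Step 3, in the claim that the contracted identity ``leaves $\eta(\overline{P}(\xi,V)Z)=0$''. That reduction is what happens for the Weyl conformal tensor in \thmref{thm5} (equation \eqref{e32}) precisely because $C$ is trace-free; the pseudo-projective tensor is not. Its contraction $\sum_i\epsilon_i\overline{P}(e_i,V,Z,e_i)=(a+2nb)\bigl(Ric(V,Z)-\tfrac{scal}{2n+1}g(V,Z)\bigr)$ survives in the contracted identity alongside the $2n\,\eta(\overline{P}(\xi,V)Z)$ term and the $\eta(Z)$-term, which is exactly the content of the paper's \eqref{e49}--\eqref{e50}. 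When you substitute your Step 1 formula for $\eta(\overline{P}(\xi,V)Z)$ together with this trace, the $b\,Ric$ contributions cancel and what remains is the paper's \eqref{e50.1}, namely $a\bigl(Ric(V,Z)+2n(\alpha^2+\beta^2)g(V,Z)\bigr)=\bigl(b\,scal+2n(2n+1)b(\alpha^2+\beta^2)\bigr)\eta(V)\eta(Z)$. Setting $Z=\xi$ and invoking \eqref{e12} annihilates the left-hand side and, since $b\neq0$, forces $scal=-2n(2n+1)(\alpha^2+\beta^2)$; dividing the remaining equation by $a\neq0$ gives the Einstein condition. The equation you should land on is therefore a Ricci identity with coefficient $a$ in front of $Ric$, not the vanishing of $\eta(\overline{P}(\xi,V)Z)$, which carries $b\,Ric$ and is a genuinely different equation.

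This is not mere bookkeeping: even if $\eta(\overline{P}(\xi,V)Z)=0$ were granted, feeding it back into Step 1 only yields $b\,Ric(Y,Z)=-\lambda g(Y,Z)+\bigl(\lambda-2nb(\alpha^2+\beta^2)\bigr)\eta(Y)\eta(Z)$, an $\eta$-Einstein identity; to upgrade it to Einstein you must show $\lambda=2nb(\alpha^2+\beta^2)$, and tracing your identity only produces $(a-b)\bigl(scal+2n(2n+1)(\alpha^2+\beta^2)\bigr)=0$. The hypotheses permit $a=b$, in which case your route cannot determine $scal$ and the cancellation of the $\eta(V)\eta(Z)$ part that you rely on is unavailable. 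The paper's substitution avoids this because the scalar curvature is pinned down by the $\eta(V)\eta(Z)$ coefficient (proportional to $b$) in \eqref{e50.1}, not by a trace of an $\eta$-Einstein relation.
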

\begin{proof}
From equality \eqref{e43}, we have $\eta(\overline{P}(X,Y)\xi)=0$.
Now,
$$(R(X,Y)\overline{P}(U,V)Z=R(X,Y)\overline{P}(U,V)Z-\overline{P}(R(X,Y)U,V)Z-\overline{P}(U,R(X,Y)V)Z-$$
$$-\overline{P}(U,V)R(X,Y)Z.$$
By assumption $R(X,Y)\overline{P}=0$, so we have
\begin{equation}\label{e47}
R(X,Y)\overline{P}(U,V)Z-\overline{P}(R(X,Y)U,V)Z-\overline{P}(U,R(X,Y)V)Z-
\end{equation}
$$-\overline{P}(U,V)R(X,Y)Z=0.$$
Therefore
$$g(R(\xi,Y)\overline{P}(U,V)Z,\xi)-g(\overline{P}(R(\xi,Y)U,V)Z,\xi)-g(\overline{P}(U,R(\xi,Y)V)Z,\xi)-$$
$$-g(\overline{P}(U,V)R(\xi,Y)Z,\xi)=0.$$
From this, it follows that,
\begin{equation}\label{e48}
-\overline{P}(U,V,Z,Y)+\eta(Y)\eta(\overline{P}(U,V)Z)-\eta(U)\eta(\overline{P}(Y,V)Z)+
\end{equation}
$$+g(Y,U)\eta(\overline{P}(\xi,V)Z)-\eta(V)\eta(\overline{P}(U,Y)Z)+g(Y,V)\eta(\overline{P}(U,\xi)Z)-$$
$$-\eta(Z)\eta(\overline{P}(U,V)Y)=0.$$
Let $\{e_i\}$, $i=1,...,2n+1$ be an orthonormal basis. Then summing up for $1\leq i \leq 2n+1$ of the relation \eqref{e48} for $Y=U=e_i$ yields
\begin{equation}\label{e49}
\overline{P}(e_i,V,Z,e_i)-2n\eta(\overline{P}(\xi,V)Z)+\eta(Z)\eta(\overline{P}(e_i,V)e_i)=0.
\end{equation}
Taking the trace of the identity, we obtain
\begin{equation}\label{e50}
-\overline{P}(e_i,V,Z,e_i)+2n\overline{P}(\xi,V,Z,\xi)+\eta(Z)\overline{P}(\xi,e_i,e_i,\xi)=0.
\end{equation}
From identity \eqref{e50}, we get
\begin{equation}\label{e50.1}
aRic(V,Z)=-2n.a(\alpha^2+\beta^2)g(V,Z)+(b.scal+2n(2n+1)b(\alpha^2+\beta^2))\eta(V)\eta(Z).
\end{equation}
Taking $Z=\xi$ in \eqref{e50.1} and using \eqref{e12} we obtain
\begin{equation}
scal=-2n(2n+1)(\alpha^2+\beta^2)\quad and\quad Ric(V,Z)=-2n(\alpha^2+\beta^2)g(V,Z).
\end{equation}
\end{proof}

The PC-Bochner curvature tensor on $M$ is defined by \cite{Z1}
$$\mathbf{B}(X,Y,Z,W)=R(X,Y,Z,W)+\frac{1}{2n+4}(Ric(X,Z)g(Y,W)-Ric(Y,Z)g(X,W)+$$
$$+Ric(Y,W)g(X,Z)-Ric(X,W)g(Y,Z)+Ric(\varphi X,Z)g(Y,\varphi W)-$$
$$-Ric(\varphi Y,Z)g(X,\varphi W)+Ric(\varphi Y,W)g(X,\varphi Z)-Ric(\varphi X,W)g(Y,\varphi Z)+$$
$$+2Ric(\varphi X,Y)g(Z,\varphi W)+2Ric(\varphi Z,W)g(X,\varphi Y)-Ric(X,Z)\eta(Y)\eta(W)+$$
$$+Ric(Y,Z)\eta(X)\eta(W)-Ric(Y,W)\eta(X)\eta(Z)+Ric(X,W)\eta(Y)\eta(Z))+$$
$$+\frac{k-4}{2n+4}(g(X,Z)g(Y,W)-g(Y,Z)g(X,W))-\frac{k+2n}{2n+4}(g(Y,\varphi W)g(X,\varphi Z)-$$
$$-g(X,\varphi W)g(Y,\varphi Z)+2g(X,\varphi Y)g(Z,\varphi W))-\frac{k}{2n+4}(g(X,Z)\eta(Y)\eta(W)-$$
$$-g(Y,Z)\eta(X)\eta(W)+g(Y,W)\eta(X)\eta(Z)-g(X,W)\eta(Y)\eta(Z)),$$
where $k=-\frac{scal-2n}{2n+2}$.

Using the PC-Bochner curvature tensor we have
\begin{thm}\label{thm9}
If a trans-para-Sasakian manifold is para-contact conformally flat, then $\alpha^2+\beta^2=1$.
\end{thm}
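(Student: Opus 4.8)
The plan is to read ``para-contact conformally flat'' as the vanishing of the PC-Bochner tensor, $\mathbf{B}\equiv 0$, and then to substitute a carefully chosen family of arguments into that identity. I would put $Y=W=\xi$ and let $X,Z$ run over the paracontact distribution $\mathbb{D}$, so that $\eta(X)=\eta(Z)=0$. With this choice the great majority of the terms in the expansion of $\mathbf{B}$ disappear: every factor of the shape $g(\,\cdot\,,\varphi\xi)$, $g(\xi,\varphi Z)=\eta(\varphi Z)$, or $Ric(\varphi\xi,\,\cdot\,)$ vanishes because $\varphi\xi=0$ and $\eta\circ\varphi=0$, and every term carrying an explicit $\eta(X)$ or $\eta(Z)$ also drops out. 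What survives is a short combination of $R(\xi,X,\xi,Z)$, $Ric(X,Z)$, $Ric(\xi,\xi)$, $g(X,Z)$ and the scalar $k$.

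Next I would evaluate the surviving pieces with the formulas already proved. Under the standing hypothesis $\varphi(grad\,\alpha)=-(2n-1)grad\,\beta$, Corollary~\ref{co1} gives $\xi(\beta)=0$; hence \eqref{e6} specialises to $R(\xi,X,\xi,Z)=(\alpha^2+\beta^2)g(X,Z)$ for $X,Z\in\mathbb{D}$, and \eqref{e12} gives $Ric(\xi,\xi)=-2n(\alpha^2+\beta^2)$ together with $Ric(X,\xi)=0$ on $\mathbb{D}$. Feeding these into $\mathbf{B}(X,\xi,Z,\xi)=0$, the two occurrences of $Ric(X,Z)$ cancel against each other, the $\varphi$-block and one of the $\eta$-blocks vanish entirely, and one is left with
\[
(\alpha^2+\beta^2)\,g(X,Z)-\frac{2n(\alpha^2+\beta^2)}{2n+4}\,g(X,Z)+\frac{k-4}{2n+4}\,g(X,Z)-\frac{k}{2n+4}\,g(X,Z)=0 .
\]
Since $g$ is non-degenerate on $\mathbb{D}$, I may choose $X,Z\in\mathbb{D}$ with $g(X,Z)\neq 0$ and conclude that the scalar coefficient vanishes. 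Multiplying by $2n+4$ the two terms involving $k$ cancel, leaving $(2n+4)(\alpha^2+\beta^2)-2n(\alpha^2+\beta^2)-4=0$, i.e.\ $4(\alpha^2+\beta^2)-4=0$, which is exactly $\alpha^2+\beta^2=1$.

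The only real obstacle is bookkeeping discipline: the PC-Bochner tensor has a long defining expression, so one must keep careful track of which of its two dozen or so terms survive the substitution $Y=W=\xi$, $X,Z\in\mathbb{D}$, and of the signs. The two points worth flagging are that the $Ric(X,Z)$ contributions cancel without invoking any $\eta$-Einstein property, so no extra hypothesis is needed, and that the auxiliary constant $k$ (equivalently the scalar curvature) drops out as well, which is why the conclusion is the clean relation $\alpha^2+\beta^2=1$ rather than something scalar-curvature dependent. As a final consistency check I would confirm the conclusion in the familiar subcases---para-Sasakian ($\alpha=1,\beta=0$) and para-Kenmotsu ($\alpha=0,\beta=1$)---where $\alpha^2+\beta^2=1$ indeed holds.
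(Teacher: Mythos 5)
Your proof is correct and is essentially the paper's own argument: the paper likewise derives the result by substituting $\xi$ into two slots of $\mathbf{B}=0$ (it takes $X=Z=\xi$, obtaining $(\alpha^2+\beta^2-1)(Y-\eta(Y)\xi)=0$) and then invokes \eqref{e6} together with the nondegeneracy of the paracontact distribution. Your choice $Y=W=\xi$ with $X,Z\in\mathbb{D}$ is equivalent by the (anti)symmetries of $\mathbf{B}$, and your term-by-term bookkeeping, including the cancellation of the $Ric(X,Z)$ and $k$ contributions, checks out.
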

\begin{proof}
Suppose that the manifold is para-contact conformally flat. Then the condition $\mathbf{B}(X,Y)Z=0$ holds.
Putting $X=Z=\xi$ and using \eqref{e6}, we obtain
\begin{equation}\label{e51}
(\alpha^2+\beta^2-1)(Y-\eta(Y)\xi)=0.
\end{equation}
Since $Y-\eta(Y)\xi=\varphi^2Y\neq 0$, we have $\alpha^2+\beta^2-1=0$.
\end{proof}
\begin{thm}\label{thm10}
If a trans-para-Sasakian manifold satisfies the condition $\mathbf{B}(\xi,Y)Ric=0$, then it is either an Einstein manifold with scalar curvature $scal=-2n(2n+1)(\alpha^2+\beta^2)$ or $\alpha^2+\beta^2=1$.
\end{thm}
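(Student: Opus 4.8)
The hypothesis $\mathbf{B}(\xi,Y)Ric=0$ is to be understood, as with the conditions $R(X,Y)C=0$, $R(X,Y)\overline{C}=0$, $R(X,Y)\overline{P}=0$ treated above, in the derivation sense: the endomorphism $\mathbf{B}(\xi,Y)$ acts on the Ricci tensor as a derivation of the tensor algebra, so the assumption reads
\[
Ric\bigl(\mathbf{B}(\xi,Y)U,V\bigr)+Ric\bigl(U,\mathbf{B}(\xi,Y)V\bigr)=0
\]
for all vector fields $Y,U,V$. We keep the standing assumption $\varphi(grad\,\alpha)=-(2n-1)grad\,\beta$, so that \eqref{e12} and \eqref{e13} hold and $\xi(\beta)=0$; in particular $R(\xi,X)\xi=(\alpha^2+\beta^2)(X-\eta(X)\xi)$ by \eqref{e6}.

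The main step, and the only laborious one, is to evaluate $\mathbf{B}$ with $\xi$ placed in two of its slots. Inserting $X=Z=\xi$ (resp.\ $X=\xi$, $W=\xi$) in the definition of $\mathbf{B}$ and using $\varphi\xi=0$, $\eta\circ\varphi=0$, $g(\varphi\,\cdot\,,\xi)=\eta(\varphi\,\cdot)=0$, $Ric(X,\xi)=-2n(\alpha^2+\beta^2)\eta(X)$ (hence $Ric(\varphi X,\xi)=0$) and the formula for $R(\xi,X)\xi$ above, one finds that most terms of $\mathbf{B}$ vanish or cancel in pairs and the terms linear in $k=-\tfrac{scal-2n}{2n+2}$ (hence in $scal$) cancel among themselves, leaving
\[
\mathbf{B}(\xi,Y)\xi=\frac{4(\alpha^2+\beta^2-1)}{2n+4}\bigl(Y-\eta(Y)\xi\bigr),\qquad
g\bigl(\mathbf{B}(\xi,Y)Z,\xi\bigr)=-\frac{4(\alpha^2+\beta^2-1)}{2n+4}\bigl(g(Y,Z)-\eta(Y)\eta(Z)\bigr)
\]
(only the non-vanishing of the constant $\tfrac{4}{2n+4}$ will actually matter). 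The first identity is, up to this constant, exactly \eqref{e51} from the proof of Theorem~\ref{thm9}. The work here is pure bookkeeping: tracking which of the many $\varphi$-, $\eta$- and $Ric$-terms of $\mathbf{B}$ survive, and checking the cancellation of the scalar-curvature terms — an error in the surviving coefficient would change the dichotomy below, so this is where care is needed.

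With these identities the rest is short. Put $U=\xi$ in the derivation identity:
\[
Ric\bigl(\mathbf{B}(\xi,Y)\xi,V\bigr)+Ric\bigl(\xi,\mathbf{B}(\xi,Y)V\bigr)=0.
\]
By the first identity and \eqref{e12}, $Ric(\mathbf{B}(\xi,Y)\xi,V)=\tfrac{4(\alpha^2+\beta^2-1)}{2n+4}\bigl(Ric(Y,V)+2n(\alpha^2+\beta^2)\eta(Y)\eta(V)\bigr)$; and, again by \eqref{e12} and then the second identity, $Ric(\xi,\mathbf{B}(\xi,Y)V)=-2n(\alpha^2+\beta^2)\,g(\mathbf{B}(\xi,Y)V,\xi)=\tfrac{8n(\alpha^2+\beta^2)(\alpha^2+\beta^2-1)}{2n+4}\bigl(g(Y,V)-\eta(Y)\eta(V)\bigr)$. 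Adding the two, the $\eta(Y)\eta(V)$-terms cancel and one is left with
\[
\frac{4(\alpha^2+\beta^2-1)}{2n+4}\Bigl(Ric(Y,V)+2n(\alpha^2+\beta^2)\,g(Y,V)\Bigr)=0.
\]
Hence either $\alpha^2+\beta^2=1$, or $Ric(Y,V)=-2n(\alpha^2+\beta^2)g(Y,V)$, so that $M$ is an Einstein manifold; in the latter case, taking the trace over an orthonormal basis $\{e_i\}_{i=1}^{2n+1}$ gives $scal=-2n(2n+1)(\alpha^2+\beta^2)$, which is the assertion.
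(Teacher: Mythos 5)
Your proposal is correct and follows essentially the same route as the paper: interpret $\mathbf{B}(\xi,Y)Ric=0$ as the derivation identity, substitute $\xi$ into one Ricci slot, and use the evaluation of $\mathbf{B}$ with $\xi$ in two arguments (the computation underlying Theorem~\ref{thm9}) together with \eqref{e12} to reach $(\alpha^2+\beta^2-1)\bigl(Ric(Y,V)+2n(\alpha^2+\beta^2)g(Y,V)\bigr)=0$, which is exactly the paper's \eqref{e53}. The only difference is cosmetic: you carry the explicit factor $\tfrac{4}{2n+4}$ that the paper silently drops, and you supply the trace step giving $scal=-2n(2n+1)(\alpha^2+\beta^2)$, which the paper states but does not write out.
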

\begin{proof}
Suppose that the condition $\mathbf{B}(\xi,Y)Ric(Z,V)=0$ holds.This condition implies that
\begin{equation}\label{e52}
Ric(\mathbf{B}(\xi,Y)Z,V)+Ric(Z,\mathbf{B}(\xi,Y)V)=0.
\end{equation}
Putting $V=\xi$ and using \eqref{e6}, we obtain
\begin{equation}\label{e53}
(\alpha^2+\beta^2-1)(Ric(Y,Z)+2n(\alpha^2+\beta^2)g(Y,Z))=0.
\end{equation}
\end{proof}

\section*{Acknowledgments}

S.Z. is partially supported by Contract  DN 12/3/12.12.2017 and Contract 80-10-24/17.04.2018 with the Sofia University "St.Kl.Ohridski".

\end{document}